\newif\ifsiamart
\newcommand{\email}[1]{\href{mailto:#1}{#1}}
\newenvironment{keywords}{\noindent \textbf{Keywords.}}{}
\newcommand{\KL}[2]{\text{KL}(#1\,|\,#2)}
\DeclareMathOperator{\supp}{supp}
\def\W{\mathcal W}
\def\Hc{\mathcal H}
\def\P{\mathcal P}
\def\R{\mathbb R}
\newcommand{\Wass}{\mathscr{W}}
\newcommand{\mb}{\mathbb}
\DeclareMathOperator*{\E}{\mb{E}}
\DeclareMathOperator*{\amin}{argmin}
\DeclareMathOperator{\Id}{I}
\newcommand{\Hess}[1]{\text{Hess}\left( #1 \right)}
\renewcommand{\d}{\mathrm{d}}
\newcommand{\ddt}{\frac{\d}{\d t}}
\newcommand{\li}{\liminf_{n\rightarrow\infty}}
\newcommand{\ls}{\limsup_{n\rightarrow\infty}}
\newcommand{\norm}[1]{\left\lVert#1\right\rVert}
\def\<#1,#2>{\left\langle #1,\,#2\right\rangle}
\newtheorem{remark}[theorem]{Remark}
\newtheorem{assumption}{Assumption}
\theoremstyle{plain}
\newtheorem{theorem}{Theorem}
\newtheorem{lemma}[theorem]{Lemma}
\newtheorem{corollary}[theorem]{Corollary}
\newtheorem{proposition}[theorem]{Proposition}
\newtheorem{definition}[theorem]{Definition}
\newtheorem{assumption}{Assumption}
\newcommand{\mathleft}{\@fleqntrue\@mathmargin0pt}
\newcommand{\mathcenter}{\@fleqnfalse}
\crefname{lemma}{Lemma}{Lemmas}
\crefname{remark}{Remark}{Remarks}
\crefname{assumption}{Assumption}{Assumptions}
\crefname{proposition}{Proposition}{Propositions}
\crefname{section}{Section}{Sections}
\crefname{subsection}{Subsection}{Subsections}
\crefname{equation}{}{}
\Crefname{equation}{Equation}{Equations}
\newlist{lemmaenum}{enumerate}{3}
\setlist[lemmaenum]{label=(\alph*),ref=\,(\alph*)}
\crefname{lemmaenum}{Lemma}{Lemmas}
\newlist{assumpenum}{enumerate}{5}
\setlist[assumpenum]{label=(\alph*), font={\bfseries}}
\newlist{auxenum}{enumerate}{2}
\setlist[auxenum]{label=(\alph*),ref=(\alph*)}
\crefname{auxenumi}{Item}{Items}
\crefname{enumi}{}{}
\crefname{equation}{}{}
\crefname{assumpenumi}{}{}
\crefname{assumpenumii}{}{}
\Crefname{assumpenumi}{Assumption}{Assumptions}
\Crefname{assumpenumii}{Assumption}{Assumptions}
\Crefname{assumpenumii}{Assumption}{Assumptions}
\Crefname{lemmaenumi}{Part}{Parts}
\Crefname{figure}{Figure}{Figures}
\numberwithin{equation}{section}
\numberwithin{theorem}{section}
\let\oldparagraph=\paragraph
\renewcommand\paragraph[1]{\oldparagraph{#1.}}
\begin{document}
\title{Computing Optimal Transport Plans via Min-Max Gradient Flows}

\ifsiamart
\else
    \author[1]{Lauren Conger$^{a,}$}
    \author[1]{Franca Hoffmann$^{b,}$}
    \author[1]{Ricardo Baptista$^{c,}$}
    \author[1]{Eric Mazumdar$^{d,}$}
    \affil[ ]{\footnotesize $^a$\email{lconger@caltech.edu},
        $^b$\email{franca.hoffmann@caltech.edu}, $^c$\email{rsb@caltech.edu},
    $^d$\email{mazumdar@caltech.edu}}
    \affil[1]{\footnotesize Department of Computing and Mathematical Sciences, Caltech, USA}
    \date{}
\fi

\maketitle

\begin{abstract}
We pose the Kantorovich optimal transport problem as a min-max problem with a Nash equilibrium that can be obtained dynamically via a two-player game, providing a framework for approximating optimal couplings. We prove convergence of the timescale-separated gradient descent dynamics to the optimal transport plan, and implement the gradient descent algorithm with a particle method, where the marginal constraints are enforced weakly using the KL divergence, automatically selecting a dynamical adaptation of the regularizer. The numerical results highlight the different advantages of using the standard Kullback-Leibler (KL) divergence versus the reverse KL divergence with this approach, opening the door for new methodologies.
\end{abstract}

\begin{keywords}
optimal transport, Wasserstein gradient flow, min-max
\end{keywords}


\section{Introduction}

Optimally transporting mass from one distribution $\mu$ to another $\nu$ is an important problem in many applications, ranging from sampling to numerically solving partial differential equations. In this work, we reformulate the Kantorovich optimal transport (OT) problem
\begin{align*}
    \inf_{\rho\in\Gamma(\mu,\nu)} \iint c(x,y)\d \rho(x,y) \,,
\end{align*}
as a min-max problem by rewriting the marginal constraints with KL divergences, the weight of which is maximized,
\begin{align*}
    \inf_{\rho} \sup_{\Lambda\in\R}  \iint c(x,y)\d \rho(x,y) + \Lambda \left(\KL{\rho_1}{\mu} + \KL{\rho_2}{\nu}\right) \,.
\end{align*}
In the above, $\Gamma(\mu,\nu)$ is the set of measures whose marginals are $\mu$ and $\nu$, the function $c$ denotes the optimal transport cost, and $\rho_1$ and $\rho_2$ are the marginals of the joint probability measure $\rho$.
The Nash equilibrium of the min-max problem coincides with the minimizer of the OT problem, and this formulation allows us to develop a gradient flow particle method to dynamically approximate the optimal coupling.

Other works have proposed particle methods for solving OT problems, such as \cite{pandolfi_optimal-transport_2023}, which presents a finite-particle method for mass transport problems in the $\Wass$ metric, applied to particles diffusing on a sphere. Prior to that, \cite{liu_approximating_2021}  proposed a particle method for computing OT plans,  and we build upon their numerical method by using a different, more efficient density estimation method and more advantageous particle initializations. \cite{liu_approximating_2021} provides a $\Gamma$-convergence result 
that we extend 
to achieve a Danskin-type theorem (also known as the envelope theorem) for the best-response of the coupling to the soft constraint, when considering the min-max dynamics under timescale separation. We address an open question posed in that paper by proving that the energy functional is displacement convex, leading to gradient flow convergence guarantees.

Viewing the OT problem through the lens of a min-max flow in infinite dimensions allows us to build upon the results in \cite{conger_coupled_2024}, which provides conditions under which convergence of the min-max dynamics in the space of measures holds. In our setting, the energy functional defining the zero-sum game is no longer strongly convex in both players (with one player being the transport plan, and the other player being the regularization parameter enforcing the marginal constraints), and the estimates we prove here provide new results in this setting.

The most popular and best-developed method for computing OT plans is the Sinkhorn algorithm \cite{cuturi_sinkhorn_2013}. Our particle method is not expected to outperform state-of-the-art implementations of the Sinkhorn algorithm in computation time---due in large part to the amount of recent work on numerically tuning the Sinkhorn algorithm for better performance~\cite{altschuler2017near, li2023importance}. The key contributions of our method in comparison to Sinkhorn lies in the greater expressivity of our approximation to the optimal coupling. The Sinkhorn algorithm is a reweighting of points which approximate a probability density, which means the joint distribution achieved is constrained to have support on the samples from the input marginals. Because we use particles instead, we can construct joint distributions with different support than the samples from the target marginals, which is expected to yield more expressivity especially in high dimensions. Further, the entropy regularization term in the Sinkhorn algorithm encourages a joint density which factorizes with increasing regularization, but ours does not, allowing for the possibility of obtaining a solution closer to the true optimal plan, while still maintaining the computational benefits of regularization.

Recent works have explored the connection between the Sinkhorn algorithm and Wasserstein-2 gradient flows, including \cite{karimi_sinkhorn_2024}, which develops a continuous time Sinkhorn flow, encompassing the setting of  \cite{deb_wasserstein_2023}, which formulates the notion of Sinkhorn as a Wasserstein-2 mirror descent and provides convergence results. \cite{chizat_sharper_2024} gives conditions on the cost function for exponential convergence of the dual sub-optimality gap for population-level Sinkhorn. Our work is complimentary in that we do not have the KL regularization term which encourages factorization, but we do have a KL penalty term on the marginals which, when taken to have infinite weight, enforces the marginal constraints exactly and guarantees convergence of the energy to the true optimal cost.

Other approaches for approximating the OT plan exist, with a vast literature. Alternative parametric methods learn potential functions in the OT problem, which define a parametric approximation to the transport plan~\cite{seguy2018large, korotinwasserstein}. Works which compute OT maps using the Benamou-Brenier formulation of the Wasserstein-2 distance include \cite{papadakis_optimal_2014}, which provides a proximal splitting algorithm, and shows equivalence with the alternating direction method of multipliers. Similarly, \cite{li_computations_2018} solves the discretized Schr\"odinger's bridge problem and \cite{carrillo_primal_2022} provides a method for  discretizing a gradient flow in time and solves the resulting subproblems in a way that could instead be applied to an OT problem with  the Benamou-Brenier formulation. Our particle method and corresponding theoretical results take the approach of relaxing both marginal constraints simultaneously with convergence guarantees as the  constraints are more tightly enforced.

\noindent\textbf{Contributions: }
We pose the optimal transport problem as a min-max problem, allowing us to prove convergence of a timescale-separated  min-max flow to the optimal transport plan, in infinite dimensions. To do so we draw on ideas and make connections to singularly perturbed dynamical systems and two-timescale dynamics~\cite{sastry1999nonlinear}. In particular, we generalize ideas from that literature to infinite dimensions. Along the way, we address an open question in \cite{liu_approximating_2021} by proving displacement convexity of the energy functional and convergence of the resulting gradient flow to the minimizer. 

Our analysis allows us to derive a new particle-based method for approximating optimal transport plans that has \emph{convergence guarantees} and which could ultimately be a powerful tool in the optimal transport toolbox. Numerically, we illustrate how the $L^2$ marginal density error can be improved if the shape of the source and target are taken into account when selecting forward KL (mode-seeking) vs reverse KL (moment-matching) for the energy functional when enforcing the soft constraints on the marginals.

\section{Problem Setup}

\subsection{Notation}
Let $\P_2(\R^d)$ denote the space of probability measures with finite second moments on $\R^d$. We use the notation $\rho(x)\d x$ for measures with or without Lebesgue density. For two measures $\mu\in\P(\R^{d_x})$ and $\nu\in\P(\R^{d_y})$, we denote by $\Gamma(\mu,\nu)$  the set of all joint distributions whose marginals are equal to $\mu$ and $\nu$, i.e.,
\begin{align*}
    \Gamma(\mu,\nu) = \left\{ \gamma\in\P(\R^{d_x} \times \R^{d_y})\, | \, \mu=\pi_{1\#} \gamma\,,\, \nu=\pi_{2\#}\gamma  \right\},
\end{align*}
where $\pi_{i\#}\gamma$ denotes the pushforward of $\gamma$ on the $i^{th}$ marginal.
The transport cost function is $c:
\R^{d_x}\times \R^{d_y}\to[0,\infty)$, and if $c(x,y)=\|x-y\|^2$ is the quadratic cost, we denote the corresponding Wasserstein metric by $\Wass$, i.e.,
\begin{align*}
    \Wass(\mu, \nu)^2 = \inf_{\gamma\in\Gamma(\mu, \nu)} \int \|x-y\|^2 \d \gamma(x,y) \,\,.
\end{align*}
Then $\Gamma^*(\mu, \nu)$ is the set of optimal couplings $\gamma\in\Gamma(\mu, \nu)$ achieving the inf above.
For a sequence of measures $(\rho^{(n)})$, narrow convergence is defined as convergence in duality with all continuous bounded functions and denoted $\rho^{(n)}\overset{*}{\rightharpoonup}\bar \rho$. Convergence in $\Wass$ is equivalent to narrow convergence plus convergence of the second moments of $\rho^{(n)}$.

\subsection{Displacement Convexity}
\begin{definition}[Displacement Convexity]\label{def:displacement_convexity}
A functional $G:\P_2(\R^d)\to\R$ is \emph{$\lambda$-displacement convex} if for all $\rho_0,\rho_1$ that are atomless we have
\begin{align*}
    G(\rho_s) \leq (1-s)G(\rho_0) + s G(\rho_1) + \frac{\lambda}{2} \Wass(\rho_0,\rho_1)^2\,,
\end{align*}
where $\rho_s:=((1-s)x+sy)_\# \gamma$ for $\gamma\in\Gamma^*(\rho_0,\rho_1)$.
\end{definition}

When $\lambda>0$, $\lambda$-displacement convexity is analogous to strong convexity for functions on Euclidean space, with the inequality using the Wasserstein-2 distance function instead of the two-norm. 
A sufficient condition for uniform displacement convexity with constant $\lambda$ is 
\begin{align}
\begin{split}
        \int \<x-x',\nabla_x \delta_\rho G[\rho](x) - \nabla_x \delta_\rho G[\tilde \rho](x') > \d \gamma(x,x')  
    \ge \lambda \Wass(\rho,\tilde\rho)^2
\end{split}
\end{align}
for all $\gamma\in\Gamma^*(\rho_0,\rho_1)$.
For details regarding convexity inequalities, see \cite[Proposition 16.4]{Villani07}. 

\subsection{Optimal Transport Problem}
The Kantorovich formulation of the optimal transport problem, parameterized by a cost function $c:\R^{d_x}\times \R^{d_y} \to \R_{\ge 0}$, is solved by finding a joint distribution  $\rho \in \P_2(\R^{d_x} \times \R^{d_y})$ (i.e., a coupling) which minimizes
\begin{align}\label{eq:OT_problem}
    \inf_{\rho\in\Gamma(\mu,\nu)} \iint c(x,y)\d \rho(x,y)\,.
\end{align}
The minimizer $\rho^*\in\Gamma^*(\mu, \nu)$ is the transport plan which moves mass between $\mu$ and $\nu$ in an optimal fashion. Rather than enforcing the marginal constraints exactly, we can solve this problem approximately by solving the relaxed problem
\begin{align}\label{eq:approx_OT}
    \inf_{\rho\in\P_2}   E(\rho,\Lambda; \mu,\nu) \,,
\end{align}
where $E$ is the energy functional
\begin{align*}
    E(\rho,\Lambda; \mu,\nu) = \iint c(x,y) \, \d \rho(x,y)
    + \Lambda \KL{\rho_1}{\mu} + \Lambda \KL{\rho_2}{\nu}  \,,
\end{align*}
with $\rho_1$ and $\rho_2$ the marginals of $\rho$, defined as
\begin{align*}
    \rho_1(x) &= \int \rho(x,y) \d y \,,\quad
    \rho_2(y) = \int \rho(x,y) \d x\,,
\end{align*}
and $\KL{\cdot}{\cdot}$ is the Kullback-Leibler divergence, defined as $\KL{\mu}{\nu}\coloneqq \int \mu(x) \log(\mu(x)/\nu(x)) \d x$ if $\mu\ll \nu$ and $+\infty$ otherwise. For $\mu, \nu\in \P_2$, the KL divergence is always non-negative, and $\mu=\nu$ a.e. if and only if $\KL{\mu}{\nu}=0$. We also consider the related energy functional 
\begin{align*}
    \tilde E(\rho,\Lambda;\mu,\nu)  = \iint c(x,y) \, \d \rho(x,y) 
    + \Lambda \KL{\mu}{\rho_1} + \Lambda \KL{\nu}{\rho_2} 
\end{align*}
with the arguments in the KL-divergence reversed. We experiment with this choice of energy functional in our numerical results section, showing that it may have some advantages compared to the standard KL divergence. 
The optimizer of the unconstrained problem \eqref{eq:approx_OT} is not the same as the solution to \eqref{eq:OT_problem}. However, the min-max optimization problems
\begin{align}\label{eq:min_max_OT}
    \inf_{\rho\in\P_2} \sup_{\Lambda\in\R}  &E(\rho,\Lambda; \mu,\nu)\,, \\
    \inf_{\rho\in\P_2} \sup_{\Lambda\in\R}  &\tilde E(\rho,\Lambda; \mu,\nu)\,. \label{eq:min_max_OT_nonconvex}
\end{align}
have Nash equilibria given by the solution to \eqref{eq:OT_problem}.
While both \eqref{eq:min_max_OT} and \eqref{eq:min_max_OT_nonconvex} have the same Nash equilibria, the KL divergence terms in \eqref{eq:min_max_OT_nonconvex} are not necessarily convex in $\rho$, even if $\mu$ and $\nu$ are log-concave.
In order to solve \eqref{eq:OT_problem} numerically, we will solve \eqref{eq:min_max_OT} using a min-max gradient flow with respect to the metric $\W_2$ as
\begin{align}\label{eq:min_max_dynamics}
\begin{split}
        \partial_t \rho(t) &= -\frac{1}{\beta}\nabla_{\Wass,\rho} E(\rho(t),\Lambda(t);\mu,\nu) \\
    \ddt \Lambda(t) &= \partial_\Lambda  E(\rho(t),\Lambda(t);\mu,\nu)\,,
\end{split}
\end{align}
where $\nabla_{\Wass,\rho}$ denotes the gradient with respect to $\rho$ in the $\Wass$ metric, and  $0<\beta<1$ is a temperature parameter specifying how fast $\rho$ updates relative to $\Lambda$. When $\beta$ is close to zero, \eqref{eq:min_max_dynamics} is a two-timescale dynamical system.

We will prove convergence in the best-response setting, that is, when $\beta \rightarrow 0$. In this setting, $\rho$ is instantly minimizing $E(\cdot,\Lambda;\mu,\nu)$, and the dynamics are given by
\begin{align}\label{eq:best_response_dynamics}
\begin{split}
        \ddt \Lambda(t) &= \partial_\Lambda  E(b[\Lambda],\Lambda;\mu,\nu) \\
    b[\Lambda] &\coloneqq \amin_{\rho\in\P_2} E(\rho,\Lambda;\mu,\nu) \,.
\end{split}
\end{align}
The minimizer of $E(\cdot,\Lambda;\mu,\nu)$ was proven to exist for any fixed $\Lambda>0$ in \cite[Corollary 1]{liu_approximating_2021}, and therefore $b[\Lambda]$ is well-defined for any $0<\Lambda<+\infty$. 
The other timescale-separation regime, when $\Lambda$ instantly best-responds, is no longer an entropy-regularized problem because as $\Lambda \rightarrow +\infty$, the optimization functional $E$ $\Gamma$-converges to the optimal transport problem \eqref{eq:OT_problem} \cite[Theorem 7]{liu_approximating_2021}. 

\section{Results}\label{sec:results}
We use the following assumptions for the proofs of our key results; relaxation is likely possible, and we later show numerical examples that do not satisfy them.
\begin{assumption}[Admissibility]\label{assump:c_mu_nu_nice_enough}
    The cost functional $c(x,y):\R^{d_x}\times \R^{d_y}\to \R_{\ge 0}$ is lower semicontinuous (lsc),
    and input measures $\mu\in\P_2(\R^{d_x})$ and  $\nu\in\P_2(\R^{d_y})$ are such that $\iint c(x,y)\d\mu(x)\d\nu(y)<\infty$.
\end{assumption}
\begin{assumption}[Convexity]\label{assump:c_mu_nu_convex}
    The cost function $c$ and input measures $\mu\in\P_2(\R^{d_x}),\nu\in\P_2(\R^{d_y})$ satisfy
    \begin{itemize}
        \item $\Hess{c(x,y)}\succeq \tilde \lambda \Id$ for all $x\in\R^{d_x},y\in\R^{d_y}$,
        \item $\nabla^2_x \log \mu(x) \preceq -\lambda \Id$ for all $x\in\R^{d_x}$, and 
        \item $\nabla^2_y \log \nu(y) \preceq -\lambda \Id$ for all $y\in\R^{d_y}$.
    \end{itemize}
\end{assumption}
\begin{assumption}[Existence of unique minimizer]\label{assump:unique_minimizer}
    The functional $E(\cdot,\Lambda;\mu,\nu)$ has a unique minimizer for every $\Lambda>0$.
\end{assumption}
Assumption~\ref{assump:unique_minimizer} is more general than Assumptions~\ref{assump:c_mu_nu_nice_enough}-\ref{assump:c_mu_nu_convex}; for example, when Assumptions~\ref{assump:c_mu_nu_nice_enough}-\ref{assump:c_mu_nu_convex} are satisfied with $c(x,y)=h(x-y)$, then Assumption~\ref{assump:unique_minimizer} follows directly \cite[Corollary 1]{liu_approximating_2021}. 
\subsection{Displacement Convexity of Energy}\label{subsec:convexity}
For a fixed $\Lambda>0$, consider the dynamics
\begin{align}\label{eq:rho-flow}
    \partial_t \rho(t) &= -\nabla_{\Wass,\rho} E(\rho(t),\Lambda;\mu,\nu)\,.  
\end{align}
The gradient flow \eqref{eq:rho-flow} with a fixed $\Lambda$ was proposed in \cite{liu_approximating_2021} as a method for approximately solving the optimal transport problem \eqref{eq:OT_problem}, together with a numerical scheme for \eqref{eq:rho-flow}. The displacement convexity and convergence to a steady state of the flow \eqref{eq:rho-flow} were left as an open question. 
In this section, we provide proofs for both. 
 In fact, the evolution of the marginals $\rho_1$, $\rho_2$ for $\rho$ solving \eqref{eq:rho-flow} is not a closed system, unless $\rho(x,y)$ factors. If $\rho$ does factor, we can think of $\rho_1$, $\rho_2$ as solving an infinite dimensional two-player game (the aligned case in \cite{conger_coupled_2024}) with the same energy $E(\rho_1\rho_2,\Lambda;\mu,\nu)$. 
First we will prove displacement convexity of $E$ with respect to $\rho$, and then convergence of the dynamics \eqref{eq:rho-flow} to a unique steady state. To do so, we will first prove displacement convexity of $\Hc_1(\rho)$ and $\Hc_2(\rho)$, where 
\begin{align*}
    \Hc_i(\rho) \coloneqq \int {\pi_i}_\#\rho(x) \log {\pi_i}_\#\rho(x) \d x
    =  \int \rho_i(x) \log \rho_i(x) \d x\,.
\end{align*}
\begin{lemma}\label{lem:entropy_displ_cvx}
    The energy functionals $\Hc_1 $ and $\Hc_2$ are zero-displacement-convex in the space of absolutely continuous measures $\P^{ac}$.
\end{lemma}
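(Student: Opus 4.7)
I would reduce $0$-displacement convexity of $\Hc_1$ on $\P^{ac}(\R^{d_x+d_y})$ to convexity of the classical entropy along a projected curve in $\P^{ac}(\R^{d_x})$, using the Brenier representation of the optimal coupling. Concretely, fix $\rho_0,\rho_1\in\P^{ac}$ and let $\gamma\in\Gamma^*(\rho_0,\rho_1)$ be the optimal plan; by Brenier's theorem $\gamma = (\mathrm{id},\nabla\Phi)_\#\rho_0$ for some convex $\Phi:\R^{d_x+d_y}\to\R$, so the $\Wass$-geodesic reads $\rho_s = T_s{}_\#\rho_0$ with $T_s = (1-s)\mathrm{id} + s\nabla\Phi$. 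Its first marginal can be written as
\[
\rho_s^{(1)} = S_s{}_\#\rho_0 = ((1-s)a + sb)_\#\tilde\gamma,\qquad S_s(x_1,x_2) := (1-s)x_1 + s\,\partial_{x_1}\Phi(x_1,x_2),
\]
where $\tilde\gamma := (\pi_1\times\pi_1)_\#\gamma \in \Gamma(\rho_0^{(1)},\rho_1^{(1)})$ is a (generally non-optimal) coupling of the first marginals.

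The key input is fiberwise convexity of the Brenier potential: for each fixed $x_2$, the slice $\Phi(\cdot,x_2)$ is convex on $\R^{d_x}$ as the restriction of a jointly convex function, so $D_{x_1}^2\Phi(\cdot,x_2)\succeq 0$ and the partial Jacobian $D_{x_1}S_s = (1-s)\Id + s D_{x_1}^2\Phi(\cdot,x_2)$ is symmetric positive semidefinite for all $s\in[0,1]$. By the Brunn--Minkowski inequality, $s\mapsto \det(D_{x_1}S_s)^{1/d_x}$ is then concave on each fiber, which is the classical ingredient behind McCann's displacement convexity of entropy. Using the disintegration $\rho_0 = \int \rho_0(\cdot\mid x_2)\,d(\pi_{2\#}\rho_0)(x_2)$ together with the fiberwise change-of-variables formula, I would represent $\rho_s^{(1)}(z)$ as an integral over $x_2$ of pushforward densities and combine this with the convexity of $U(r) = r\log r$ to control $\Hc_1(\rho_s) = \int U(\rho_s^{(1)})\,dz$ along $s$. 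The same reasoning applies to $\Hc_2$ with the roles of $x$ and $y$ exchanged.

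The main obstacle is the aggregation step: since $S_s:\R^{d_x+d_y}\to\R^{d_x}$ is not invertible and $\rho_s^{(1)}$ is an integral of fiber densities rather than the pushforward of a single $\R^{d_x}\to\R^{d_x}$ map, the standard McCann change-of-variables argument does not translate directly into convexity of the marginal entropy. I expect the crucial step to be recognizing $\rho_s^{(1)}$ as a generalized displacement interpolation driven by the coupling $\tilde\gamma$ and invoking (or adapting) the Ambrosio--Gigli--Savaré theorem that entropy is $0$-convex along generalized geodesics. Since $\tilde\gamma$ is typically not the optimal coupling of $(\rho_0^{(1)},\rho_1^{(1)})$, one must exploit its specific structure inherited from the joint-space OT plan, either via a Pr\'ekopa--Leindler-type argument on the joint density or by leveraging the convex-gradient form of $\partial_{x_1}\Phi(\cdot,x_2)$ fiber by fiber.
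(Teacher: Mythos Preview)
Your route is quite different from the paper's and, as you yourself flag, it does not close. The paper avoids Brenier maps and fiberwise change-of-variables entirely: it works through the first-variation monotonicity criterion for displacement convexity stated just after Definition~\ref{def:displacement_convexity}. Since $\delta_\rho\Hc_1(\rho)(x,y)=\log\rho_1(x)$ has no $y$-dependence, the $y$-component of the inner product vanishes identically and the joint-space condition collapses to
\[
\int \big\langle x-x',\, \nabla_x\log\rho_1(x)-\nabla_{x'}\log\tilde\rho_1(x')\big\rangle\,d\gamma \ge 0,
\]
which the paper attributes directly to the zero-displacement convexity of $\int\rho_1\log\rho_1$ on $\P^{ac}(\R^{d_x})$. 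That is a two-line reduction; your fiber decomposition is a much longer road toward the same inequality.

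The obstacle you identify is genuine for your approach, and the proposed fixes do not resolve it. The AGS notion of generalized geodesic requires a three-plan whose two-marginals are each \emph{optimal} from a common base point; the projected coupling $\tilde\gamma=(\pi_1\times\pi_1)_\#\gamma$ inherits no such structure from the joint Brenier map, so that theorem cannot be invoked as stated. Your fiberwise positivity $D_{x_1}^2\Phi(\cdot,x_2)\succeq 0$ does give concavity of $s\mapsto \det(D_{x_1}S_s)^{1/d_x}$ on each slice, but aggregating over $x_2$ runs into the wrong direction of Jensen: writing $\rho_s^{(1)}=\int\mu_{s,x_2}\,d\rho_0^{(2)}(x_2)$ one gets $H(\rho_s^{(1)})\le \int H(\mu_{s,x_2})\,d\rho_0^{(2)}(x_2)$, yet the endpoint values do not match (the inequality is strict whenever $x_1$ and $x_2$ are dependent under $\rho_0$), so convexity of the right-hand side in $s$ does not transfer to the left. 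As written the plan is incomplete; the aggregation step is the heart of the matter, not a technicality, and a Pr\'ekopa--Leindler-type argument would have to be worked out in full rather than cited.
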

\begin{proof}
In order to show that $\Hc_1(\rho)$ is displacement convex in $\rho$, we will show that 
\begin{align*}
    &\iint\bigg( \<x-x',\nabla_x \delta_\rho \Hc_1[\rho](x,y) - \nabla_x \delta_\rho \Hc_1[\tilde \rho](x',y')>
    \\
    &
    + \<y-y',\nabla_y \delta_\rho \Hc_1[\rho](x,y) - \nabla_y \delta_\rho \Hc_1[\tilde \rho](x',y')>\bigg) \d \gamma(z)\ge 0\,,
\end{align*}
 for all absolutely continuous $\rho,\tilde \rho \in \P_2(\R^{d_x}\times \R^{d_y})$, 
  with $\gamma$ an optimal coupling between $\rho$ and $\tilde \rho$, and $z:=[x,x',y,y']$,
The first variation of $\Hc_1(\rho)$ with respect to $\rho$ is 
    \begin{align*}
        \delta_\rho \Hc_1(\rho)(x,y) &=\log \rho_1(x)\,,
    \end{align*}
and therefore the second inner product in the convexity condition is zero. The convexity condition becomes
    \begin{align*}
        \int \<x-x',\nabla_x (\log(\rho_1(x)) - \log(\tilde \rho_1(x')))> \d \gamma(z) \ge 0\,.    
    \end{align*}
The above holds because $\int\rho_1\log\rho_1$ is zero-displacement convex with respect to $\rho_1$; therefore, $\Hc_1$ is also zero-displacement convex with respect to $\rho$. The result follows similarly for $\Hc_2(\rho)$.
\end{proof}
From now on and for the remainder of Section~\ref{sec:results}, let Assumption~\ref{assump:c_mu_nu_nice_enough} hold. Assumption~\ref{assump:c_mu_nu_convex} is needed for Subsection~\ref{subsec:convexity} and Assumption~\ref{assump:unique_minimizer} is crucial for Sections~\ref{subsec:danskin} and \ref{subsec:long_time_behavior}.
\begin{theorem}[Displacement Convexity]\label{thm:displacement_convexity_E}
 Let Assumptions~\ref{assump:c_mu_nu_nice_enough}-\ref{assump:c_mu_nu_convex} hold. The energy functional $E(\cdot,\Lambda;\mu,\nu)$ is $\tilde \lambda + \Lambda \lambda$ displacement convex. 
\end{theorem}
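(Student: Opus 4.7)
The plan is to use the monotonicity-of-first-variation sufficient condition already stated in the paper. Decomposing the KL terms via $\KL{\rho_i}{\mu_i} = \Hc_i(\rho) - \int \log \mu_i \, \d \rho_i$, the first variation on the product space $\R^{d_x}\times\R^{d_y}$ computes to
\begin{align*}
\nabla_{(x,y)}\delta_\rho E[\rho](x,y) = \begin{pmatrix}\nabla_x c(x,y) + \Lambda\nabla_x\log\rho_1(x) - \Lambda\nabla_x\log\mu(x)\\ \nabla_y c(x,y) + \Lambda\nabla_y\log\rho_2(y) - \Lambda\nabla_y\log\nu(y)\end{pmatrix}.
\end{align*}
So the quantity to be bounded from below by $(\tilde\lambda+\Lambda\lambda)\Wass(\rho,\tilde\rho)^2$ splits into five pieces: a cost contribution, two logarithmic-marginal entropy pieces, and two log-potential pieces coming from $\mu$ and $\nu$.

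Each piece will be handled separately along an optimal coupling $\gamma$ between $\rho$ and $\tilde\rho$, with points written as $z=(x,y)$ and $z'=(x',y')$. For the cost term, \Cref{assump:c_mu_nu_convex} gives $\Hess{c}\succeq\tilde\lambda\,\Id$ on the product space, so convexity of $c$ yields $\int\<z-z',\nabla c(z)-\nabla c(z')>\d\gamma\ge\tilde\lambda\int|z-z'|^2\d\gamma=\tilde\lambda\,\Wass(\rho,\tilde\rho)^2$. The two log-potential pieces use that $-\log\mu$ is $\lambda$-convex in $x$ and $-\log\nu$ is $\lambda$-convex in $y$, giving respectively $\Lambda\lambda\int|x-x'|^2\d\gamma$ and $\Lambda\lambda\int|y-y'|^2\d\gamma$. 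For the two entropy pieces, I would reuse exactly the computation from the proof of \Cref{lem:entropy_displ_cvx}, which established that each of the mixed inner-product integrals for $\Hc_1$ and $\Hc_2$ is non-negative (the $y$- and $x$-components respectively drop out since $\delta_\rho\Hc_i$ depends only on one marginal variable).

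Adding the five lower bounds gives $\tilde\lambda\,\Wass(\rho,\tilde\rho)^2+\Lambda\lambda\int|x-x'|^2\d\gamma+\Lambda\lambda\int|y-y'|^2\d\gamma+0+0$, and the last two integrals combine to $\Lambda\lambda\,\Wass(\rho,\tilde\rho)^2$ precisely because $\gamma$ is the optimal coupling on the joint space, so $\int(|x-x'|^2+|y-y'|^2)\d\gamma=\Wass(\rho,\tilde\rho)^2$. This yields the claimed $(\tilde\lambda+\Lambda\lambda)$-displacement convexity.

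The main subtle point, which I'd flag in the exposition, is that neither of the two log-potential pieces is individually strongly convex on the joint space: $-\log\mu(x)$ viewed as a function of $(x,y)$ has Hessian $\lambda$ in $x$ but $0$ in $y$, and analogously for $-\log\nu(y)$. A naive bound piece-by-piece would only give a constant controlling $\int|x-x'|^2\d\gamma$, which is strictly less than $\Wass^2$. It is the simultaneous presence of both marginal constraints that restores the full $\lambda\,\Wass^2$ contribution from the regularizer, and this is why the argument must track the $x$- and $y$-integrals separately before recombining them at the very last step using optimality of $\gamma$.
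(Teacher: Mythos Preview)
Your proof is correct and follows essentially the same decomposition as the paper's, just worked out at the level of the monotonicity sufficient condition rather than by citing displacement convexity of each summand directly. The paper streamlines your final ``subtle point'' by treating $-\log\mu(x)-\log\nu(y)$ as a single potential on the joint space whose full Hessian is $\succeq\lambda\,\Id$, so the combined log-potential term is immediately $\lambda$-displacement convex without the separate bookkeeping of the $x$- and $y$-integrals.
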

\begin{proof}
    The term $\iint c(x,y) \d \rho(x,y)$ is $\tilde \lambda$-displacement convex in $\rho$. To check the entropy regularization terms, we will show the convexity of $\KL{\rho_1}{\mu}+\KL{\rho_2}{\nu}$. Since
    \begin{align*}
       \KL{\rho_1}{\mu}+\KL{\rho_2}{\nu} = \Hc_1(\rho)+\Hc_2(\rho)
       -\iint  \log( \mu(x)\nu(y)) \d \rho(x,y) \,,
    \end{align*} and by Lemma~\ref{lem:entropy_displ_cvx}, the first two terms are zero-displacement convex, it remains to check the last integral. By Assumption~\ref{assump:c_mu_nu_convex}, the function $-\log( \mu(x)\nu(y))$ is strongly $\lambda$-convex in $(x,y)$, and therefore the integral is $\lambda$-displacement convex in $\rho$.
\end{proof}
Theorem~\ref{thm:displacement_convexity_E} is the key result that provides convergence guarantees for dynamics \eqref{eq:rho-flow}. 
\begin{theorem}[Convergence]\label{thm:cv}
    Let Assumptions~\ref{assump:c_mu_nu_nice_enough}-\ref{assump:c_mu_nu_convex} hold with $\tilde \lambda + \Lambda \lambda>0$. There exists a unique minimizer $\bar \rho$ of \eqref{eq:approx_OT}, and any solution $\rho(t)$ to \eqref{eq:rho-flow} satisfies
    \begin{align*}
    \Wass(\rho(t),\bar \rho) \le e^{-(\tilde \lambda + \Lambda \lambda)t}  \Wass(\rho(t),\rho(0))\,,
\end{align*}
\end{theorem}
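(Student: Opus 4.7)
The plan is to derive exponential contraction directly from the $\lambda$-displacement convexity with effective constant $\lambda := \tilde\lambda+\Lambda\lambda > 0$ established in Theorem~\ref{thm:displacement_convexity_E}, together with the $\lambda$-monotonicity of the first variation encoded in the sufficient condition recalled after Definition~\ref{def:displacement_convexity}.

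First I would establish existence and uniqueness of $\bar\rho$. Uniqueness is immediate from strict displacement convexity: any two minimizers would give a displacement midpoint of strictly lower energy. For existence, I would apply the direct method in $(\P_2(\R^{d_x}\times\R^{d_y}),\Wass)$: along any minimizing sequence the cost and KL terms are uniformly bounded, so the marginals are tight against $\mu$ and $\nu$ respectively, hence so are the joints; a narrow cluster point exists, and lower semicontinuity of $c$ (Assumption~\ref{assump:c_mu_nu_nice_enough}) together with the narrow lower semicontinuity of KL in its first argument makes it a minimizer.

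Second, I would differentiate the squared Wasserstein distance along the flow. Let $T_t$ denote the Brenier optimal transport map from $\rho(t)$ to $\bar\rho$, well-defined whenever $\rho(t)$ is absolutely continuous. The standard Otto-calculus identity for the Wasserstein gradient flow yields
\[
\tfrac{d}{dt}\tfrac{1}{2}\Wass(\rho(t),\bar\rho)^2 \;=\; -\int \bigl\langle \nabla\delta_\rho E[\rho(t),\Lambda;\mu,\nu](x),\; x-T_t(x)\bigr\rangle\,\d\rho(t)(x).
\]
Applying the $\lambda$-monotonicity sufficient condition with $\gamma=(\mathrm{id},T_t)_\#\rho(t)$ and $\tilde\rho=\bar\rho$, and using that $\nabla\delta_\rho E[\bar\rho,\Lambda;\mu,\nu]\equiv 0$ on $\mathrm{supp}\,\bar\rho$ (Euler--Lagrange condition at the minimizer), gives
\[
\int \bigl\langle x-T_t(x),\;\nabla\delta_\rho E[\rho(t),\Lambda;\mu,\nu](x)\bigr\rangle\,\d\rho(t) \;\ge\; \lambda\,\Wass(\rho(t),\bar\rho)^2,
\]
hence $\tfrac{d}{dt}\Wass(\rho(t),\bar\rho)^2 \le -2\lambda\,\Wass(\rho(t),\bar\rho)^2$. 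Grönwall's inequality then produces the claimed rate $\Wass(\rho(t),\bar\rho) \le e^{-(\tilde\lambda+\Lambda\lambda) t}\,\Wass(\rho(0),\bar\rho)$.

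The main technical obstacle is the rigorous justification of the chain rule in the Wasserstein space: ensuring that $t\mapsto\rho(t)$ is absolutely continuous in $(\P_2,\Wass)$ with tangent velocity $-\nabla\delta_\rho E[\rho(t),\Lambda;\mu,\nu]$, and that $\rho(t)$ stays Lebesgue-absolutely continuous so that Brenier's theorem applies at each time. Both follow from the standard existence theory for gradient flows of $\lambda$-geodesically convex, proper, lower semicontinuous functionals on $(\P_2,\Wass)$ (cf.\ Ambrosio--Gigli--Savaré), once one verifies that $E(\cdot,\Lambda;\mu,\nu)$ meets the required lower-semicontinuity and coercivity hypotheses — the same verification already used for existence of $\bar\rho$.
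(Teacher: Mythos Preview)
Your proposal is correct and follows essentially the same route as the paper: existence and uniqueness of $\bar\rho$, then differentiation of $\Wass(\rho(t),\bar\rho)^2$ in time combined with the $(\tilde\lambda+\Lambda\lambda)$-displacement convexity of Theorem~\ref{thm:displacement_convexity_E} to obtain the Gr\"onwall estimate. The paper's version is terser, outsourcing existence to \cite[Corollary~1]{liu_approximating_2021} and the chain-rule/dissipativity step to \cite[Thm~3.14]{cavagnari_dissipative_2023}, whereas you unpack both via the direct method and Otto calculus; your conclusion $\Wass(\rho(t),\bar\rho)\le e^{-(\tilde\lambda+\Lambda\lambda)t}\Wass(\rho(0),\bar\rho)$ is indeed the intended one (the $\Wass(\rho(t),\rho(0))$ on the right-hand side of the displayed statement is a typo).
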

\begin{proof}
    Existence of $\bar \rho$ was shown in \cite[Corollary 1]{liu_approximating_2021} and uniqueness follows from strong convexity of $E$ (Theorem~\ref{thm:displacement_convexity_E}). Further, $E$ is lsc and non-negative since the KL-divergences and $\int c(x,y)\d\rho(x,y)$ are each lsc and non-negative. Differentiating $\Wass(\rho(t),\bar \rho)$ in time via \cite[Thm 3.14]{cavagnari_dissipative_2023} together with $(\tilde \lambda + \Lambda \lambda)$-displacement convexity (Theorem~\ref{thm:displacement_convexity_E}) allows us to conclude the estimate.
\end{proof}

\subsection{Danskin's Theorem}\label{subsec:danskin}
Let Assumptions~\ref{assump:c_mu_nu_nice_enough} and \ref{assump:unique_minimizer} hold for this section. Dropping $\mu,\nu$ from the notation for brevity, denote 
$E_d(\Lambda):= E(b[\Lambda],\Lambda)$.
In this section, we prove that \begin{align*}
    \frac{\d}{\d \Lambda} E_d(\Lambda) = \partial_\Lambda E(\rho,\Lambda)|_{\rho=b[\Lambda]}\,,
\end{align*}
that is, the gradient through the best response is equal to the gradient with the best response plugged in after differentiating, which is classically known for finite-dimensional systems as Danskin's theorem. This results in dynamics of the form
\begin{align*}
    \ddt \Lambda(t) = \KL{b_1[\Lambda(t)]}{\mu} + \KL{b_2[\Lambda(t)]}{\nu}\,,
\end{align*}
which allows us to analyze the long-time behavior of $\Lambda$ in this timescale separated setting. Denoting 
\begin{align}\label{def:V}
   V(\Lambda):=\KL{b_1[\Lambda]}{\mu} + \KL{b_2[\Lambda]}{\nu}\,,
\end{align}
we can write 
\begin{align*}
    E_d(\Lambda) = \iint c(x,y) \d b[\Lambda](x,y) + \Lambda V(\Lambda)\,,
\end{align*}
and $\dot\Lambda=V(\Lambda)$. Danskin's theorem crucially relies on continuity of $V(\cdot)$ and $b(\cdot)$; for $V$ this can be shown directly, and for $b$ we prove this by showing convergence of second moments of $b[\Lambda_n]$ for converging sequences $(\Lambda_n)_n$ and proving $\Gamma$-convergence of $E(\cdot,\Lambda_n)$ w.r.t. $\Wass$.
\begin{lemma}[Marginals of best response]\label{lem:best-response-marginals}
Fix $\Lambda>0$ and denote $Z[\Lambda]:= \iint e^{-c(x,y)/\Lambda}\d\mu(x)\d\nu(y)$. Then the product of the marginals of the best response $b[\Lambda]$ satisfies
\begin{align}\label{eq:EL}
    b_1[\Lambda](x)b_2[\Lambda](y)
    =\frac{1}{Z[\Lambda]}e^{-c(x,y)/\Lambda}\mu(x)\nu(y)=:\sigma[\Lambda](x,y)\,,
\end{align}
where the normalization constant $Z=Z[\Lambda]$ is bounded,
\begin{align}\label{eq:Z}
    0<e^{-c^*/\Lambda} \le Z[\Lambda]\le 1 \quad \forall \,\Lambda>0\,,
\end{align}
with $c^*$ the optimal value of \eqref{eq:OT_problem}.
\end{lemma}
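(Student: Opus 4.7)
The plan is to derive the product formula \eqref{eq:EL} from the first-order optimality conditions for minimizing $E(\cdot,\Lambda;\mu,\nu)$ over $\P_2$, and then combine this formula with a direct inequality (for the upper bound on $Z[\Lambda]$) and a comparison against $\rho^*\in\Gamma^*(\mu,\nu)$ (for the lower bound).

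For the formula, I would compute the first variation of $E$ with respect to $\rho$. Since each marginal depends linearly on $\rho$, the KL terms contribute $\Lambda(\log(\rho_i(\cdot)/\text{target})+1)$, giving
\[
\delta_\rho E(\rho,\Lambda)(x,y) = c(x,y) + \Lambda\log(\rho_1(x)/\mu(x)) + \Lambda\log(\rho_2(y)/\nu(y)) + 2\Lambda.
\]
By Assumption~\ref{assump:unique_minimizer}, the unique minimizer $b[\Lambda]$ satisfies the Euler--Lagrange condition $\delta_\rho E = \alpha$ on its support, where $\alpha$ is the Lagrange multiplier associated to the constraint $\int\d\rho=1$. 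Rearranging and exponentiating yields $b_1(x)b_2(y) = C\, e^{-c(x,y)/\Lambda}\mu(x)\nu(y)$ for a positive constant $C$; integrating both sides and using $\int b_1 = \int b_2 = 1$ identifies $C = 1/Z[\Lambda]$, giving \eqref{eq:EL}.

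The upper bound $Z[\Lambda]\le 1$ is then immediate: $c\ge 0$ by Assumption~\ref{assump:c_mu_nu_nice_enough} gives $e^{-c/\Lambda}\le 1$, and integration against the probability measure $\mu\otimes\nu$ preserves this bound. For the lower bound, I would substitute \eqref{eq:EL} back into the energy. Taking logarithms in \eqref{eq:EL} yields $\log(b_1b_2/(\mu\nu)) = -c/\Lambda - \log Z[\Lambda]$, and integrating against $b[\Lambda]$ gives
\[
\KL{b_1}{\mu}+\KL{b_2}{\nu} = -\frac{1}{\Lambda}\int c\,\d b[\Lambda] - \log Z[\Lambda],
\]
so that $E(b[\Lambda],\Lambda) = -\Lambda\log Z[\Lambda]$. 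Using $\rho^*\in\Gamma^*(\mu,\nu)$ as a competitor (which has marginals $\mu,\nu$, hence zero KL contributions, and transport cost $c^*$), minimality of $b[\Lambda]$ gives $-\Lambda\log Z[\Lambda] = E(b[\Lambda],\Lambda) \le E(\rho^*,\Lambda) = c^*$, equivalently $Z[\Lambda]\ge e^{-c^*/\Lambda}$. Strict positivity of $Z[\Lambda]$ follows from positivity of the integrand.

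The main technical obstacle I anticipate is the rigorous passage from the Euler--Lagrange identity on $\supp(b[\Lambda])$ to a pointwise identity on all of $\R^{d_x}\times\R^{d_y}$, since this is what the integration step implicitly requires in order to pin down the normalization constant as $1/Z[\Lambda]$. Finiteness of the KL terms forces $b_1\ll\mu$ and $b_2\ll\nu$, so the marginal log-densities are well defined; uniqueness from Assumption~\ref{assump:unique_minimizer} together with the off-support inequality $\delta_\rho E\ge\alpha$ should be the key inputs, combined with a careful treatment of the Lagrange multiplier at points where the joint density may vanish.
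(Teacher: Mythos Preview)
Your proposal is correct and follows essentially the same route as the paper: derive \eqref{eq:EL} from the Euler--Lagrange condition plus the mass constraint, then obtain $E_d(\Lambda)=-\Lambda\log Z[\Lambda]$ by integrating the log of \eqref{eq:EL} against $b[\Lambda]$ and compare with $E(\rho^*,\Lambda)=c^*$ for the lower bound on $Z$. Your write-up is in fact more explicit than the paper's (which states the upper bound without comment and handles the Euler--Lagrange step in one sentence), and the support-versus-global issue you flag is precisely the point the paper leaves implicit.
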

\begin{proof}
  Expression \eqref{eq:EL} directly follows from the Euler-Lagrange condition  for $E(\cdot,\Lambda)$ together with the mass constraint $\iint b_1[\Lambda](x)b_2[\Lambda](y)\d x\d y=1$. 
     From Assumption~\ref{assump:c_mu_nu_nice_enough}, the optimal transport problem \eqref{eq:OT_problem} admits a solution $\rho^*\in\P_2$ with finite cost \cite[Ch 6]{ags}. Let
    \begin{align*}
        c^* :=\inf_{\rho\in\Gamma(\mu,\nu)} \iint c(x,y) \d \rho(x,y)<\infty\,.
    \end{align*}
    Then the optimal value of \eqref{eq:approx_OT} is upper-bounded by $c^*$
    \begin{align*}
        \inf_{\rho\in\P_2} E(\rho,\Lambda) \le c^*\,,
    \end{align*}
    since the OT minimizer $\rho^*$ for \eqref{eq:OT_problem} results in $E(\rho^*,\Lambda)=c^*$.
 Rearranging \eqref{eq:EL}, we have a.e. on $\supp(b[\Lambda])$,
 \begin{align*}
     c(x,y)+\Lambda\left(\log\frac{b_1[\Lambda]}{\mu}+\log\frac{b_2[\Lambda]}{\nu}\right)=-\Lambda\log Z[\Lambda]\,.
 \end{align*}
 Integrating against $b[\Lambda]$ and using the previous estimate,
 \begin{align*}
    -\Lambda\log Z[\Lambda]=E_d(\Lambda)\le c^*\,. 
 \end{align*}
\end{proof}
As a direct consequence of Lemma~\ref{lem:best-response-marginals}, we obtain explicit expressions for the marginals of the best response,
\begin{align}
        b_1[\Lambda](x) &= \frac{1}{Z[\Lambda]}\mu(x)\int e^{-c(x,y)/\Lambda} \d \nu(y)
        \label{eq:best-response-marginal-mu}\\
        b_2[\Lambda](y) &= \frac{1}{Z[\Lambda]}\nu(y) \int e^{-c(x,y)/\Lambda} \d \mu(x)\,.
        \label{eq:best-response-marginal-nu}
    \end{align}  
\begin{lemma}[Convergence of Second Moments]\label{lem:cv_second_moments}
    For any positive sequence $\Lambda_n \rightarrow \bar \Lambda>0$, it holds that $\int \norm{z}^2 \d b[\Lambda_n](z) \rightarrow \int  \norm{z}^2\d b[\bar\Lambda](z)$ with $z=(x,y)$.
\end{lemma}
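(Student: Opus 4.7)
The plan is to leverage the explicit formulas \eqref{eq:best-response-marginal-mu}--\eqref{eq:best-response-marginal-nu} for the marginals of $b[\Lambda]$ together with the dominated convergence theorem. Since $z=(x,y)$, I first decompose the second moment into marginal contributions:
\begin{align*}
    \int \|z\|^2 \,\d b[\Lambda_n](z) = \int \|x\|^2 \,\d b_1[\Lambda_n](x) + \int \|y\|^2 \,\d b_2[\Lambda_n](y)\,.
\end{align*}
Substituting \eqref{eq:best-response-marginal-mu}, the first term becomes
\begin{align*}
    \int \|x\|^2 \,\d b_1[\Lambda_n](x) = \frac{1}{Z[\Lambda_n]} \iint \|x\|^2 e^{-c(x,y)/\Lambda_n} \,\d\mu(x)\,\d\nu(y)\,,
\end{align*}
and similarly for the $y$-marginal using \eqref{eq:best-response-marginal-nu}. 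Thus it suffices to prove convergence of the numerator and the normalizer separately.

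For the numerator, observe that $0 \le e^{-c(x,y)/\Lambda_n} \le 1$ since $c \ge 0$, so the integrand is dominated by $\|x\|^2$, which is integrable on the product space because $\mu\in\P_2(\R^{d_x})$ and $\nu$ is a probability measure. Pointwise, $e^{-c(x,y)/\Lambda_n} \to e^{-c(x,y)/\bar\Lambda}$ by continuity of $t\mapsto e^{-c(x,y)t}$ at $t=1/\bar\Lambda$ (with both sides equal to $0$ wherever $c(x,y)=+\infty$). The dominated convergence theorem thus yields
\begin{align*}
    \iint \|x\|^2 e^{-c(x,y)/\Lambda_n}\,\d\mu(x)\,\d\nu(y) \longrightarrow \iint \|x\|^2 e^{-c(x,y)/\bar\Lambda}\,\d\mu(x)\,\d\nu(y)\,.
\end{align*}

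For the normalizer $Z[\Lambda_n]=\iint e^{-c(x,y)/\Lambda_n}\,\d\mu(x)\,\d\nu(y)$, the same dominated convergence argument (now with constant dominating function $1$) gives $Z[\Lambda_n]\to Z[\bar\Lambda]$. Finally, to pass to the limit in the ratio we need $Z[\bar\Lambda]>0$, which is guaranteed by \eqref{eq:Z} since $\bar\Lambda>0$ implies $Z[\bar\Lambda]\ge e^{-c^*/\bar\Lambda}>0$. The symmetric argument handles the $y$-marginal term, and summing gives the claim. The argument is essentially routine once the marginals are written out explicitly; the only subtlety is ensuring that a finite dominating function exists uniformly in $n$, which is provided for free by the uniform bound $e^{-c(x,y)/\Lambda_n}\le 1$ and the $\P_2$ assumption on the marginals.
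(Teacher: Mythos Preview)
Your proof is correct and follows essentially the same route as the paper: reduce to the marginal second moments via the explicit formulas from Lemma~\ref{lem:best-response-marginals}, then apply dominated convergence with the trivial bound $e^{-c/\Lambda_n}\le 1$ together with the positivity bound \eqref{eq:Z} on $Z[\bar\Lambda]$. The paper compresses this into two sentences by phrasing it in terms of $\sigma[\Lambda]$, but the content is identical; your only superfluous remark is the caveat about $c(x,y)=+\infty$, which cannot occur since $c$ is finite-valued by definition.
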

\begin{proof}
It is sufficient to show convergence of the second moments for the marginals, which is equivalent to $\int \norm{z}^2\d\sigma[\Lambda_n](z)\to \int \norm{z}^2\d\sigma[\bar\Lambda](z)$ with $\sigma$ defined in \eqref{eq:EL}. By Lebesgue's dominated convergence theorem and the bound in~\eqref{eq:Z}, 
we have $Z[\Lambda_n]\to Z[\bar\Lambda]$ and convergence of the second moments follows.
\end{proof}
\begin{proposition}[$\Gamma$-convergence]\label{prop:gamma_cv}
   For any positive sequence $\Lambda_n \rightarrow \bar \Lambda>0$  as $n\rightarrow \infty$, it holds
   $$E(\cdot,\Lambda_n) \overset{\Gamma}{\rightarrow} E(\cdot,\bar \Lambda) \text{ as } n\rightarrow \infty \text{ in } \Wass.$$ 
\end{proposition}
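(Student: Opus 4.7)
The plan is to verify the two standard conditions of $\Gamma$-convergence in the $\Wass$ topology: the $\liminf$ inequality along all $\Wass$-convergent sequences, and the existence of a recovery sequence for every $\rho \in \P_2(\R^{d_x} \times \R^{d_y})$. The key observation making this tractable is that $E(\rho, \Lambda)$ depends linearly on $\Lambda$, with the cost term independent of $\Lambda$, so the $\Lambda$-dependence is very well-behaved and the only analytic content lies in lower semicontinuity of the three pieces as functionals of $\rho$.

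For the $\liminf$ inequality, suppose $\rho_n \xrightarrow{\Wass} \rho$. I would handle the three summands separately. For the transport cost $\iint c(x,y)\, \d\rho(x,y)$, Assumption~\ref{assump:c_mu_nu_nice_enough} gives that $c$ is lsc and nonnegative, hence $\rho \mapsto \iint c\, \d\rho$ is narrowly lsc; since $\Wass$-convergence implies narrow convergence, $\liminf_n \iint c\, \d\rho_n \geq \iint c\, \d\rho$. For the KL terms, the marginal projections $\pi_{i\#}$ are continuous with respect to narrow convergence, so $\rho_{n,i} \to \rho_i$ narrowly. Since $\mu \mapsto \KL{\mu}{\eta}$ is narrowly lsc in its first argument for any fixed $\eta$, we get $\liminf_n \KL{\rho_{n,i}}{\mu} \geq \KL{\rho_i}{\mu}$ (and similarly for $\nu$). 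Because $\Lambda_n \to \bar\Lambda > 0$ and the KL divergences are nonnegative, a standard product-of-sequences argument yields $\liminf_n \Lambda_n \KL{\rho_{n,i}}{\cdot} \geq \bar\Lambda \KL{\rho_i}{\cdot}$. Adding the three liminf inequalities gives $\liminf_n E(\rho_n, \Lambda_n) \geq E(\rho, \bar\Lambda)$.

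For the recovery sequence, I would simply take the constant sequence $\rho_n := \rho$, which trivially satisfies $\rho_n \xrightarrow{\Wass} \rho$. Then
\begin{align*}
E(\rho, \Lambda_n) = \iint c(x,y)\, \d\rho(x,y) + \Lambda_n \bigl( \KL{\rho_1}{\mu} + \KL{\rho_2}{\nu} \bigr),
\end{align*}
and by linearity in $\Lambda$ and continuity of scalar multiplication, $E(\rho, \Lambda_n) \to E(\rho, \bar\Lambda)$ as $n \to \infty$. This handles both the case $E(\rho, \bar\Lambda) < \infty$ (where actual convergence holds) and the case $E(\rho, \bar\Lambda) = +\infty$ (where the $\limsup$ bound is vacuous), so that $\limsup_n E(\rho_n, \Lambda_n) \leq E(\rho, \bar\Lambda)$ in every case.

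The main technical point requiring care is the $\liminf$ bound for the products $\Lambda_n \KL{\rho_{n,i}}{\cdot}$: although each factor is well-behaved, one must rule out anomalies when the KL divergence might blow up. Here positivity of $\bar\Lambda$ and nonnegativity of KL ensure that for any $\varepsilon > 0$ eventually $\Lambda_n \geq \bar\Lambda - \varepsilon$, giving $\Lambda_n \KL{\rho_{n,i}}{\cdot} \geq (\bar\Lambda - \varepsilon)\KL{\rho_{n,i}}{\cdot}$ and the passage to $\liminf$ followed by $\varepsilon \downarrow 0$ yields the desired bound. Beyond this, the argument is entirely a matter of assembling known lsc properties, so I do not foresee substantial obstacles.
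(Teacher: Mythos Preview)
Your proposal is correct and follows essentially the same approach as the paper: both establish the $\liminf$ inequality by splitting into the cost term (lsc of $\rho\mapsto\iint c\,\d\rho$ for nonnegative lsc $c$, which the paper phrases via Fatou's lemma for weakly converging measures) and the KL terms (narrow lsc of KL), and both take the constant sequence $\rho_n=\rho$ as the recovery sequence, invoking linearity in $\Lambda$. Your handling of the product $\Lambda_n\cdot\KL{\rho_{n,i}}{\cdot}$ is in fact more explicit than the paper's, which leaves that step implicit.
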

\begin{proof}
 Given $\Lambda_n \rightarrow \bar \Lambda$, we have for any sequence $\rho^{(n)} \to \bar\rho$ in $\Wass$, 
    \begin{align*}
        \li E(\rho^{(n)},\Lambda_n) \geq \li \iint c(x,y)\d\rho^{(n)}(x,y) 
        + \li \Lambda_n(\KL{\rho_1^{(n)}}{\mu} + \KL{\rho_2^{(n)}}{\nu})
    \end{align*}
    (see \cite[Chapter 1, (1.2)]{braides_gamma-convergence_2002}), and lower bounds for each term are sufficient. For the first term, since $c\ge 0$, it holds by Fatou's Lemma for weakly converging measures \cite[Theorem 2.4]{feinberg_fatous_2019},
    \begin{align*}
        \li \iint c(x,y)\d\rho^{(n)}(x,y) \ge
         \iint  c(x,y)\d \bar \rho(x,y) \,.
    \end{align*}
    For the second term, the lower bound follows by lsc of the KL-divergence, and we obtain
    \begin{align*}
    \li E(\rho^{(n)},\Lambda_n) \ge E(\bar\rho,\bar \Lambda)\,.
\end{align*}
    Next, we show for any measure $\bar\rho$ there exists a recovery sequence $(\rho^{(n)})\in\P_2$ with $\Wass(\rho^{(n)},\bar\rho)\to 0$ such that
    \begin{align*}
        E(\bar \rho,\bar \Lambda) \ge \ls E( \rho^{(n)},\Lambda_n)\,.
    \end{align*}
    Indeed, selecting the constant sequence $\rho^{(n)}=\bar \rho$, the integral against $c(x,y)$ is already constant, and since $\Lambda_n\to\bar\Lambda$, the $\limsup$ bound follows immediately.
\end{proof}
\begin{lemma}[Continuity of $b$]\label{lem:continuity_b(rho)}
For any sequence $(\Lambda_n)_n>0$, $\Lambda_n \rightarrow \bar \Lambda>0$  as $n\rightarrow \infty$, we have $b[\Lambda_n]\rightarrow b[\bar \Lambda]$ in $\Wass$ .
\end{lemma}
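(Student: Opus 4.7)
The plan is to combine the equi-coercivity provided by \Cref{lem:cv_second_moments} with the $\Gamma$-convergence in \Cref{prop:gamma_cv} and the uniqueness in \Cref{assump:unique_minimizer}, using a standard subsequence argument. Since a convergent sequence is bounded, we may assume $0<c_1\le \Lambda_n\le c_2<\infty$ throughout.

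First, I would establish relative compactness of $(b[\Lambda_n])_n$ in $\Wass$. By \Cref{lem:cv_second_moments}, the second moments $\int \|z\|^2 \d b[\Lambda_n](z)$ converge, hence are uniformly bounded. Chebyshev's inequality then yields tightness on $\R^{d_x+d_y}$, so by Prokhorov's theorem any subsequence admits a further subsequence, still denoted $b[\Lambda_n]$, converging narrowly to some $\rho^\ast \in \P_2$. The convergence of second moments along this subsequence upgrades narrow convergence to $\Wass$-convergence (by the characterization recalled in the notation section).

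Next, I would identify $\rho^\ast = b[\bar\Lambda]$ through the standard $\Gamma$-convergence of minimizers argument. Since $b[\Lambda_n]$ minimizes $E(\cdot,\Lambda_n)$, testing against $b[\bar\Lambda]$ gives
\begin{align*}
    E(b[\Lambda_n],\Lambda_n) \le E(b[\bar\Lambda],\Lambda_n).
\end{align*}
Using \Cref{prop:gamma_cv} with the constant recovery sequence $\rho^{(n)} \equiv b[\bar\Lambda]$ yields $\limsup_n E(b[\bar\Lambda],\Lambda_n) \le E(b[\bar\Lambda],\bar\Lambda)$, and the $\liminf$ inequality of \Cref{prop:gamma_cv} applied along $b[\Lambda_n]\to \rho^\ast$ in $\Wass$ yields $\liminf_n E(b[\Lambda_n],\Lambda_n) \ge E(\rho^\ast,\bar\Lambda)$. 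Chaining these,
\begin{align*}
    E(\rho^\ast,\bar\Lambda) \le E(b[\bar\Lambda],\bar\Lambda),
\end{align*}
so $\rho^\ast$ is a minimizer of $E(\cdot,\bar\Lambda)$, and by \Cref{assump:unique_minimizer} we conclude $\rho^\ast = b[\bar\Lambda]$. Since every subsequence admits a further subsequence converging to the same limit $b[\bar\Lambda]$ in $\Wass$, the whole sequence converges.

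The main obstacle is the identification step: we must be sure the $\liminf$ inequality from \Cref{prop:gamma_cv} can be applied along our extracted subsequence in $\Wass$, which is exactly why the second-moment convergence from \Cref{lem:cv_second_moments} is essential (narrow convergence alone is not enough to invoke \Cref{prop:gamma_cv} as stated). The other ingredients---tightness from uniform second moment bounds, Prokhorov's theorem, and uniqueness of the minimizer---are all routine once this compatibility is in place.
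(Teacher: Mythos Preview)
Your proposal is correct and follows essentially the same route as the paper: precompactness from \Cref{lem:cv_second_moments} via Prokhorov, then identification of the limit through the $\Gamma$-convergence of \Cref{prop:gamma_cv} combined with the uniqueness in \Cref{assump:unique_minimizer}. The only cosmetic difference is that you spell out the convergence-of-minimizers argument by hand, whereas the paper packages it into a direct citation of \cite[Theorem~1.21]{braides_gamma-convergence_2002}.
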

\begin{proof}
The boundedness of second moments, which follows from convergence of second moments via Lemma~\ref{lem:cv_second_moments}, provides tightness of $(b[\Lambda_n])_n$, and so $(b[\Lambda_n])_n$ is precompact in the narrow topology \cite[Prokorov’s
Theorem 5.1.3]{ags}. Thanks to convergence of second moments (Lemma~\ref{lem:cv_second_moments}), the sequence $(b[\Lambda_n])_n$ is also precompact in $\Wass$. Together with $\Gamma$-convergence of $E(\cdot,\Lambda_n)$ from Proposition~\ref{prop:gamma_cv}, we conclude $b[\Lambda_n]\rightarrow b[\bar \Lambda]$ in $\Wass$ \cite[Theorem 1.21]{braides_gamma-convergence_2002}.
\end{proof}
\begin{lemma}[Continuity of $V$]\label{lem:continuity_V}
    $V:(0,\infty)\to[0,\infty)$ defined in \eqref{def:V} is continuous.
\end{lemma}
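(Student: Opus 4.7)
The plan is to reduce the question to dominated convergence applied to the explicit formulas \eqref{eq:best-response-marginal-mu}–\eqref{eq:best-response-marginal-nu} for the marginals of the best response. Substituting $b_1[\Lambda](x)=\mu(x)f_\Lambda(x)/Z[\Lambda]$ with $f_\Lambda(x):=\int e^{-c(x,y)/\Lambda}\d\nu(y)\in(0,1]$ into the definition of KL, and using $\int b_1[\Lambda]\d x=1$, gives the identity
\begin{align*}
\KL{b_1[\Lambda]}{\mu}=\frac{1}{Z[\Lambda]}\int\mu(x)f_\Lambda(x)\log f_\Lambda(x)\,\d x-\log Z[\Lambda]\,,
\end{align*}
and analogously for $\KL{b_2[\Lambda]}{\nu}$ with $g_\Lambda(y):=\int e^{-c(x,y)/\Lambda}\d\mu(x)$. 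The continuity of $V$ will thus follow from the continuity of $\Lambda\mapsto Z[\Lambda]$ and of the two integrals.

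I would fix an arbitrary sequence $\Lambda_n\to\bar\Lambda>0$ and first show $Z[\Lambda_n]\to Z[\bar\Lambda]$: the integrand $e^{-c(x,y)/\Lambda_n}$ converges pointwise to $e^{-c(x,y)/\bar\Lambda}$ and is bounded by $1\in L^1(\mu\otimes\nu)$, so dominated convergence applies. The lower bound \eqref{eq:Z}, $Z[\bar\Lambda]\ge e^{-c^*/\bar\Lambda}>0$, guarantees $1/Z[\Lambda_n]$ is bounded on a neighborhood of $\bar\Lambda$ and that $\log Z[\Lambda_n]\to\log Z[\bar\Lambda]$. By the same dominated-convergence argument applied with $\nu$ (resp.\ $\mu$) in place of $\mu\otimes\nu$, we also obtain $f_{\Lambda_n}(x)\to f_{\bar\Lambda}(x)$ for $\mu$-a.e.\ $x$ (resp.\ $g_{\Lambda_n}(y)\to g_{\bar\Lambda}(y)$ for $\nu$-a.e.\ $y$).

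The decisive step is controlling the logarithmic integrand: the map $t\mapsto t\log t$ is continuous on $[0,1]$ (setting $0\log 0:=0$) and satisfies $|t\log t|\le 1/e$. Hence $\mu(x)f_{\Lambda_n}(x)\log f_{\Lambda_n}(x)$ converges pointwise to $\mu(x)f_{\bar\Lambda}(x)\log f_{\bar\Lambda}(x)$ while being dominated by $\mu(x)/e\in L^1(\R^{d_x})$. A third application of dominated convergence gives convergence of the integral, and combined with the continuity of $Z$ this yields $\KL{b_1[\Lambda_n]}{\mu}\to\KL{b_1[\bar\Lambda]}{\mu}$. The identical argument for the $\nu$-marginal handles $\KL{b_2[\Lambda_n]}{\nu}$, and summing the two establishes $V(\Lambda_n)\to V(\bar\Lambda)$.

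The main obstacle is that KL divergence is only lower semicontinuous under Wasserstein convergence, so one cannot simply plug Lemma~\ref{lem:continuity_b(rho)} into the definition of $V$; an upper bound on the limsup would be missing. The explicit Euler–Lagrange formulas in Lemma~\ref{lem:best-response-marginals} bypass this by reducing the KL term to a bounded functional of $f_\Lambda\in[0,1]$, which is exactly what makes dominated convergence applicable.
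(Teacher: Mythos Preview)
Your proof is correct and follows essentially the same route as the paper: rewrite $V$ via the Euler--Lagrange formulas \eqref{eq:best-response-marginal-mu}--\eqref{eq:best-response-marginal-nu} to obtain the identity \eqref{eq:V-in-Z}, then apply dominated convergence to $Z$, to $f_\Lambda$ (the paper's $Z_1$) pointwise, and finally to the integral $\int \mu f_\Lambda\log f_\Lambda$. The one substantive difference is the dominating function in the last step: the paper bounds $|Z_1\log Z_1|\le \tfrac{1}{\Lambda}\int c\,\d\nu$ via Jensen and invokes Assumption~\ref{assump:c_mu_nu_nice_enough} for integrability against $\mu$, whereas your elementary bound $|t\log t|\le 1/e$ on $[0,1]$ gives the uniform majorant $\mu/e$ and sidesteps that assumption entirely. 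This is a small but genuine simplification.
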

\begin{proof}
    Substituting the expressions for the marginals of $b[\Lambda]$ obtained in \eqref{eq:best-response-marginal-mu}-\eqref{eq:best-response-marginal-nu}, we can write
\begin{align}\label{eq:V-in-Z}
    V=&-2\log Z+\frac{1}{Z} \int Z_1(x)\log Z_1(x)\d\mu(x) 
    + \frac{1}{Z} \int Z_2(y)\log Z_2(y)\d\nu(y)\,,
\end{align}
with $Z_1[\Lambda](x), Z_2[\Lambda](y)\in (0,1)$ given by
\begin{align*}
Z_1(x):= \int e^{-c(x,y)/\Lambda}\d\nu(y),\, Z_2(y):= \int e^{-c(x,y)/\Lambda}\d\mu(x) \,.
\end{align*}
For any positive sequence $\Lambda_n\to\bar\Lambda$, we have by Lebesgue's dominated convergence theorem that $Z[\Lambda_n]$, $Z_1[\Lambda_n](x)$ and $Z_2[\Lambda_n](y)$ converge (pointwise). By Jensen's inequality, $|Z_1[\Lambda](x)\log Z_1[\Lambda](x)|\le \frac{1}{\Lambda} \int c(x,y)\d\nu(y)$  for all $x\in \supp\mu$, and thanks to Assumption~\ref{assump:c_mu_nu_nice_enough} this upper bound is integrable w.r.t. $\d\mu$. Again by Lebesgue's dominated convergence theorem, 
\begin{align*}
    &\lim_{n\to\infty}\int Z_1[\Lambda_n](x)\log Z_1[\Lambda_n](x)\d\mu(x)
    = \int Z_1[\bar\Lambda](x)\log Z_1[\bar\Lambda](x)\d\mu(x)\,,
\end{align*}
and similarly for the term involving $Z_2$. We conclude that $V(\Lambda_n)\to V(\bar\Lambda)$.
\end{proof}
\begin{theorem}[Danskin]\label{thm:danskin}
$   \frac{\d}{\d \Lambda} E_d(\Lambda) = \partial_\Lambda E(\rho,\Lambda)|_{\rho=b[\Lambda]}$. As a result, the dynamics \eqref{eq:best_response_dynamics} are given by $\dot\Lambda=V(\Lambda)$.
\end{theorem}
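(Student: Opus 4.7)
The plan is to carry out the classical envelope-theorem sandwich argument, which simplifies considerably here because $E(\rho,\Lambda)$ is affine in $\Lambda$ for fixed $\rho$. Specifically, since
\begin{align*}
    E(\rho,\Lambda) = \iint c(x,y)\,\d\rho(x,y) + \Lambda\bigl(\KL{\rho_1}{\mu} + \KL{\rho_2}{\nu}\bigr)\,,
\end{align*}
we have $\partial_\Lambda E(\rho,\Lambda) = \KL{\rho_1}{\mu} + \KL{\rho_2}{\nu}$, independent of $\Lambda$, and in particular $\partial_\Lambda E(\rho,\Lambda)|_{\rho=b[\Lambda]} = V(\Lambda)$ by the definition \eqref{def:V}. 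So the target identity reduces to showing $\frac{\d}{\d\Lambda}E_d(\Lambda) = V(\Lambda)$.

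Next I would use the defining optimality of the best response at each value of the parameter to sandwich the finite difference of $E_d$. For $h>0$, the inequality $E(b[\Lambda+h],\Lambda+h)\le E(b[\Lambda],\Lambda+h)$ combined with affinity in $\Lambda$ gives
\begin{align*}
    E_d(\Lambda+h) - E_d(\Lambda) \le E(b[\Lambda],\Lambda+h) - E(b[\Lambda],\Lambda) = h\,V(\Lambda)\,,
\end{align*}
while $E(b[\Lambda],\Lambda)\le E(b[\Lambda+h],\Lambda)$ yields the opposite-side bound
\begin{align*}
    E_d(\Lambda+h) - E_d(\Lambda) \ge E(b[\Lambda+h],\Lambda+h) - E(b[\Lambda+h],\Lambda) = h\,V(\Lambda+h)\,.
\end{align*}
Dividing by $h>0$ and letting $h\to 0^+$, continuity of $V$ from Lemma~\ref{lem:continuity_V} squeezes the right derivative to $V(\Lambda)$. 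The symmetric argument for $h<0$ (with the chain of inequalities reversed after dividing by a negative number) gives the same value for the left derivative, so $E_d$ is differentiable at $\Lambda$ with derivative $V(\Lambda)$, which is exactly the Danskin identity.

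The last sentence of the statement then follows directly: substituting this into the best-response dynamics \eqref{eq:best_response_dynamics} gives $\dot\Lambda = \partial_\Lambda E(\rho,\Lambda)|_{\rho=b[\Lambda]} = V(\Lambda)$. There is no real obstacle here, since the heavy lifting — continuity of $b[\Lambda]$ in $\Wass$ via $\Gamma$-convergence plus convergence of second moments, and the resulting continuity of $V$ — has already been established in Lemmas~\ref{lem:cv_second_moments}--\ref{lem:continuity_V}. The only subtle point worth emphasizing is that affinity in $\Lambda$ eliminates any need for uniform control on the $\Lambda$-derivative over a neighborhood of minimizers, which is what makes the sandwich bounds tight enough to conclude differentiability rather than merely sub/super-differentiability.
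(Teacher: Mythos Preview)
Your proof is correct and uses the same envelope-theorem sandwich that underlies the paper's argument, but your execution is more elementary. The paper invokes continuity of $b[\cdot]$ in $\Wass$ (Lemma~\ref{lem:continuity_b(rho)}) and then defers to an external reference \cite[Proposition~B.10]{conger_coupled_2024}, noting that $\partial_\Lambda E(\rho,\Lambda)$ is only lower semicontinuous in $\rho$ and that continuity of $V$ (Lemma~\ref{lem:continuity_V}) must be invoked separately to handle the $\limsup$ bound from below. Your version sidesteps all of this: by exploiting the affinity of $E(\rho,\cdot)$ fully, your two optimality inequalities yield the exact sandwich $V(\Lambda+h)\le (E_d(\Lambda+h)-E_d(\Lambda))/h \le V(\Lambda)$ (and its mirror for $h<0$), so the only ingredient needed is continuity of $V$ --- no convergence of $b[\Lambda+h]$, no lower semicontinuity of the KL terms in $\rho$, and no appeal to an external proposition. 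The paper's route has the advantage of fitting into a more general framework that would still apply if $E$ were merely differentiable rather than affine in $\Lambda$; your route has the advantage of being self-contained and showing that Lemma~\ref{lem:continuity_b(rho)} is not actually needed for the Danskin identity itself.
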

\begin{proof}
 Let $\bar\Lambda>0$ and define $\Lambda_h:=\bar \Lambda + h$ with $h\in\R$ small enough so that $\Lambda_h>0$. Let $\rho^{(h)}:=b(\Lambda_h)$.
  By continuity of $b(\cdot)$ w.r.t. $\Wass$ (Lemma~\ref{lem:continuity_b(rho)}), we have that $\rho^{(h)}$ and its marginals converge in $\Wass$ to $b[\bar \Lambda]$ and its marginals, respectively. We can then apply exactly the same argument as in \cite[Proposition B.10]{conger_coupled_2024} to conclude the proof, with the following adjustments: (i) all inequalities and uses of $\liminf$ and $\limsup$ are flipped as $b[\Lambda]$ is a minimizer not a maximizer, with the same result at the end, (ii) our functional $E(\rho,\Lambda)$ is linear in $\Lambda$, (iii) its derivative $\partial_\Lambda E(\rho,\Lambda)$ is only lsc in $\rho$ rather than continuous. Therefore, only the last of the four inequalities in \cite[Proposition B.10]{conger_coupled_2024} needs to be addressed in our setting: for $h<0$, 
  \begin{align*}
      \limsup_{h \uparrow 0} \frac{E_d(\Lambda_h)-E_d(\bar\Lambda)}{h} 
      &\le \limsup_{h \uparrow 0} \partial_\Lambda E(\rho,\bar\Lambda)|_{\rho=b(\Lambda_h)}
      =\limsup_{h \uparrow 0} V(\Lambda_h) =V(\bar\Lambda)\,,
  \end{align*}
  by continuity of $V$ (Lemma~\ref{lem:continuity_V}).
\end{proof}
\subsection{Long-Time Behavior}\label{subsec:long_time_behavior}
In regularized optimal transport problems, it is important for numerical stability that the weight of the regularizer does not grow to infinity too quickly. Under Assumptions~\ref{assump:c_mu_nu_nice_enough} and \ref{assump:unique_minimizer}, we prove that $\Lambda$ grows at most $\mathcal O(\sqrt{t})$, and that the KL divergence terms in $V$ go to zero as $t\rightarrow \infty$, implying convergence of the best response to the optimal transport plan.
By non-negativity of the KL-divergence (thanks to Gibbs' inequality), we have $V\ge 0$, and so $\Lambda(t)$ solving \eqref{eq:best_response_dynamics} is non-decreasing since $\dot\Lambda=V(\Lambda)$ by Theorem~\ref{thm:danskin}.
\begin{theorem}[Regularization growth condition]\label{thm:Lambda-upper}
    Under the dynamics \eqref{eq:best_response_dynamics}, the regularization parameter $\Lambda$ satisfies
    \begin{align*}
        &\Lambda(t) \le \sqrt{2(c^* t + c_0)} \quad \forall t\ge 0\,,
    \end{align*}
    where $c^*<\infty$ is the optimal value of \eqref{eq:OT_problem} and $c_0>0$ is a constant dependent on $\Lambda(0)>0$.
\end{theorem}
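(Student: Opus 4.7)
The plan is to leverage the identity for $E_d(\Lambda)$ obtained in the course of proving Lemma~\ref{lem:best-response-marginals}, together with the ODE $\dot\Lambda=V(\Lambda)$ from Theorem~\ref{thm:danskin}, to derive a differential inequality that integrates to the claimed bound.

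First I would recall the key estimate already established inside the proof of Lemma~\ref{lem:best-response-marginals}: integrating the Euler--Lagrange relation against $b[\Lambda]$ yields
\begin{align*}
   E_d(\Lambda) = \iint c(x,y)\, \d b[\Lambda](x,y) + \Lambda\, V(\Lambda) = -\Lambda \log Z[\Lambda] \le c^*.
\end{align*}
Since $c\ge 0$ by Assumption~\ref{assump:c_mu_nu_nice_enough}, the cost integral is non-negative, so that
\begin{align*}
   \Lambda\, V(\Lambda) \le c^* \quad \text{for all } \Lambda>0.
\end{align*}

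Next, by Theorem~\ref{thm:danskin}, the best-response dynamics reduce to $\dot\Lambda(t) = V(\Lambda(t))$. Because $V\ge 0$ (non-negativity of KL), $\Lambda(t)$ is non-decreasing, hence $\Lambda(t)\ge \Lambda(0)>0$ for all $t\ge 0$, so dividing is harmless. Multiplying the ODE by $\Lambda(t)$ and using the bound above gives
\begin{align*}
   \tfrac{1}{2}\ddt \Lambda(t)^2 = \Lambda(t)\, \dot\Lambda(t) = \Lambda(t)\, V(\Lambda(t)) \le c^*.
\end{align*}

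Integrating this pointwise inequality from $0$ to $t$ yields $\Lambda(t)^2 \le \Lambda(0)^2 + 2 c^* t$, so that setting $c_0 := \Lambda(0)^2/2 >0$ gives $\Lambda(t) \le \sqrt{2(c^* t + c_0)}$, as claimed. The only subtlety worth flagging is the justification that $t\mapsto \Lambda(t)^2$ is absolutely continuous so that the fundamental theorem of calculus applies; this follows from local Lipschitz regularity of $V$ on $(0,\infty)$ (continuity via Lemma~\ref{lem:continuity_V}, plus the explicit formula \eqref{eq:V-in-Z} in terms of smooth functions of $\Lambda$) which gives local-in-time classical solutions and, combined with the a priori upper bound just derived, global existence. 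I expect no real obstacle beyond these routine regularity remarks; the substantive content is the a priori estimate $\Lambda V(\Lambda)\le c^*$, which is essentially free once Lemma~\ref{lem:best-response-marginals} is in hand.
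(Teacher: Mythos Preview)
Your proposal is correct and follows essentially the same argument as the paper: from $E_d(\Lambda)\le c^*$ (Lemma~\ref{lem:best-response-marginals}) and non-negativity of the cost one gets $\Lambda V(\Lambda)\le c^*$, i.e.\ $\dot\Lambda\le c^*/\Lambda$, and then integrates $\tfrac12\ddt\Lambda^2\le c^*$ to the claimed bound. The paper phrases the last step as ``Gr\"onwall's inequality'' but it is exactly the integration you spell out; your added remarks on regularity and the explicit identification $c_0=\Lambda(0)^2/2$ are welcome but not substantive departures.
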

\begin{proof}
From $E_d(\Lambda)\le c^*$ (proof of Lemma~\ref{lem:best-response-marginals}), we have $V(\Lambda)\le c^*/\Lambda$,
    and so $\dot \Lambda\le c^*/\Lambda$.
    By Gr\"onwall's inequality,
    \begin{align*}
        \Lambda(t) \le \sqrt{2(c^* t + c_0)}\,.
    \end{align*}
\end{proof}
To characterize the long-time behavior of $E_d$ and $V$, we re-write $V$ and its derivative in compact form using the measure $\sigma$ defined in \eqref{eq:EL}, and show Lipschitzness of $V$.
\begin{lemma}\label{lem:dV_dLambda} 
We have $V=-2\log Z +{\E}_\sigma[\log (Z_1 Z_2)]$,
 and 
 \begin{align*}
    \frac{\d V}{\d\Lambda}&=\frac{1}{\Lambda^2}\mathrm{Cov}_\sigma(c,\log(Z_1 Z_2))=\frac{1}{\Lambda^2} \left( {\E}_\sigma\left[c\log (Z_1 Z_2)\right]-{\E}_\sigma[c] {\E}_\sigma[\log (Z_1 Z_2)] \right)  \,.
\end{align*}
\end{lemma}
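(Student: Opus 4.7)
The plan is to substitute the closed-form expressions for the marginals of $b[\Lambda]$ into $V$, rewrite everything via the joint measure $\sigma$, and then differentiate term-by-term using Leibniz under the integral sign.

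First, I start from the intermediate expression already appearing in the proof of Lemma~\ref{lem:continuity_V}:
\begin{align*}
    V(\Lambda) = -2\log Z + \frac{1}{Z}\int Z_1(x)\log Z_1(x)\,\d\mu(x) + \frac{1}{Z}\int Z_2(y)\log Z_2(y)\,\d\nu(y)\,.
\end{align*}
Using Fubini together with $\int e^{-c(x,y)/\Lambda}\,\d\nu(y) = Z_1(x)$ and $\int e^{-c(x,y)/\Lambda}\,\d\mu(x)= Z_2(y)$, one checks that the two integral terms combine into $\E_\sigma[\log Z_1] + \E_\sigma[\log Z_2] = \E_\sigma[\log(Z_1 Z_2)]$, giving the compact form.

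For the derivative, I differentiate each piece and collect. The key elementary identity is
\begin{align*}
    \frac{\d Z}{\d \Lambda} = \iint \frac{c(x,y)}{\Lambda^2} e^{-c(x,y)/\Lambda}\,\d\mu(x)\,\d\nu(y) = \frac{Z}{\Lambda^2}\E_\sigma[c]\,,
\end{align*}
so that $\frac{\d}{\d\Lambda}(-2\log Z) = -\frac{2}{\Lambda^2}\E_\sigma[c]$. For the expectation term I compute
\begin{align*}
    \frac{\d}{\d \Lambda}\E_\sigma[\log(Z_1 Z_2)] = -\frac{Z'}{Z}\E_\sigma[\log(Z_1 Z_2)] + \E_\sigma[\partial_\Lambda \log(Z_1 Z_2)] + \frac{1}{\Lambda^2}\E_\sigma[c\log(Z_1 Z_2)]\,,
\end{align*}
where the last term comes from differentiating the Gibbs factor $e^{-c/\Lambda}/Z$ in $\sigma$. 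The middle term simplifies via Fubini: since the $y$-marginal of $\sigma$ against $\partial_\Lambda\log Z_1(x) = \partial_\Lambda Z_1(x)/Z_1(x)$ produces exactly $Z_1(x)\mu(x)/Z$, one obtains
\begin{align*}
    \E_\sigma[\partial_\Lambda \log Z_1] = \frac{1}{Z}\int \partial_\Lambda Z_1(x)\,\d\mu(x) = \frac{\partial_\Lambda Z}{Z} = \frac{1}{\Lambda^2}\E_\sigma[c]\,,
\end{align*}
and analogously for $Z_2$, so $\E_\sigma[\partial_\Lambda \log(Z_1 Z_2)] = \frac{2}{\Lambda^2}\E_\sigma[c]$. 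Substituting and using $Z'/Z = \E_\sigma[c]/\Lambda^2$, the $\pm \frac{2}{\Lambda^2}\E_\sigma[c]$ contributions cancel, leaving exactly $\frac{1}{\Lambda^2}\bigl(\E_\sigma[c\log(Z_1 Z_2)] - \E_\sigma[c]\E_\sigma[\log(Z_1 Z_2)]\bigr) = \frac{1}{\Lambda^2}\mathrm{Cov}_\sigma(c,\log(Z_1 Z_2))$.

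The main obstacle is justifying differentiation under the integral sign in all of these manipulations. For any $\bar\Lambda>0$, I need integrable dominators on a compact neighborhood $[\bar\Lambda-\eta,\bar\Lambda+\eta]$ with $\eta<\bar\Lambda/2$. The derivative of $e^{-c/\Lambda}$ in $\Lambda$ is bounded in absolute value by $\frac{c}{\Lambda^2}e^{-c/\Lambda}\le \frac{4c}{\bar\Lambda^2}$ (since $ye^{-y}\le 1$, one actually has $ce^{-c/\Lambda}\leq \Lambda$, so $c e^{-c/\Lambda}/\Lambda^2\leq 1/\Lambda$, giving a uniform bound independent of $(x,y)$). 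Integrability of the resulting dominators against $\d\mu\,\d\nu$ follows from Assumption~\ref{assump:c_mu_nu_nice_enough}. For the $\log(Z_1 Z_2)$ factors the Jensen bound $|Z_i \log Z_i| \leq \frac{1}{\Lambda}\int c\,\d\nu$ (already used in Lemma~\ref{lem:continuity_V}) provides the required $L^1(\d\mu)$ (resp.\ $L^1(\d\nu)$) control, and the lower bound $Z\ge e^{-c^\star/\Lambda}$ from \eqref{eq:Z} keeps the $1/Z$ prefactor bounded on the neighborhood. With these dominators, all exchanges of $\partial_\Lambda$ and integration are legitimate, and the computation above delivers both claimed identities.
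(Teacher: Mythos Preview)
Your proposal is correct and follows essentially the same approach as the paper: start from the expression \eqref{eq:V-in-Z} for $V$, rewrite the two integral terms as $\E_\sigma[\log(Z_1 Z_2)]$ via Fubini, and differentiate explicitly using $\frac{\d Z}{\d\Lambda}=\frac{Z}{\Lambda^2}\E_\sigma[c]$ and the analogous formulas for $Z_1,Z_2$. The paper's proof is a one-line ``compute directly'' instruction, whereas you carry out the computation in full and additionally supply dominators to justify the Leibniz exchanges; this extra care is welcome but does not constitute a different route.
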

\begin{proof}
Using $\sigma$ defined in \eqref{eq:EL} and expression \eqref{eq:V-in-Z} for $V$, we can compute its derivative in $\Lambda$ explicitly, using $\frac{\d Z}{\d\Lambda}=\frac{1}{\Lambda^2} \iint c e^{-c/\Lambda}\d\mu\d\nu$ and
\begin{align*}
    \frac{\d Z_1(x)}{\d\Lambda}=\frac{1}{\Lambda^2} \int c e^{-c/\Lambda}\d\nu, \,
    \frac{\d Z_2(y)}{\d\Lambda}=\frac{1}{\Lambda^2} \int c e^{-c/\Lambda}\d\mu. 
\end{align*}
\end{proof}

\begin{lemma}[Lipschitzness of $V$]\label{lem:V-Lip}
There exists a constant $0<c_0<\infty$ such that 
$$
\left\|\frac{\d V}{\d\Lambda}\right\|_\infty=\sup_{\Lambda\in[0,\infty)}\left|\frac{\d V}{\d\Lambda}(\Lambda)\right|<c_0\,.
$$
\end{lemma}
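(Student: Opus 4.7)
The plan is to use the covariance identity
\[
\frac{\d V}{\d\Lambda}(\Lambda)=\frac{1}{\Lambda^2}\mathrm{Cov}_\sigma\bigl(c,\log(Z_1 Z_2)\bigr)
\]
from Lemma~\ref{lem:dV_dLambda} and to show that $\Lambda\mapsto \d V/\d\Lambda$ is continuous on $(0,\infty)$ with finite limits at both endpoints. Continuity on $(0,\infty)$ follows from Lebesgue's dominated convergence applied directly to the covariance, using the Jensen envelope $0\le -\log Z_i\le \bar c_i/\Lambda$ (where $\bar c_1(x):=\int c(x,y)\,\d\nu(y)$ and $\bar c_2(y):=\int c(x,y)\,\d\mu(x)$), the uniform lower bound $Z\ge e^{-c^*/\Lambda}$ of Lemma~\ref{lem:best-response-marginals}, and the integrability in Assumption~\ref{assump:c_mu_nu_nice_enough} to produce an integrable majorant on every compact subset of $(0,\infty)$.

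For $\Lambda\to\infty$ I would Taylor-expand $e^{-c/\Lambda}=1-c/\Lambda+O(c^2/\Lambda^2)$, giving $\log Z_i=-\bar c_i/\Lambda+O(\Lambda^{-2})$; a Cauchy--Schwarz bound then yields $|\d V/\d\Lambda|=O(\Lambda^{-3})\to 0$. For $\Lambda\downarrow 0$, I rewrite $\mathrm{Cov}_\sigma(c,\log Z_i)=-\Lambda^{-1}\mathrm{Cov}_{\sigma_i}(\tilde c_i,\hat U_i)$ with $\hat U_i:=-\Lambda\log Z_i$ and $\tilde c_i$ the Gibbs-conditional expectation of $c$. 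Both $\hat U_1(x)$ and $\tilde c_1(x)$ converge to $c_1^*(x):=\inf_y c(x,y)$ (and analogously $\hat U_2(y),\tilde c_2(y)\to c_2^*(y):=\inf_x c(x,y)$), while $\sigma_1$ concentrates at the unique global minimizer $(x^{**},y^{**})$ of $c$ on $\supp\mu\times\supp\nu$ (existing via Lemma~\ref{lem:best-response-marginals} and unique by Assumption~\ref{assump:unique_minimizer}). The envelope theorem forces $\nabla c_1^*(x^{**})=0$ and $\nabla c_2^*(y^{**})=0$, and a Gaussian Laplace expansion around $(x^{**},y^{**})$ (with $\sigma_i$ of width $O(\sqrt\Lambda)$) together with matching expansions of $\hat U_i$ and $\tilde c_i$ produces the cancellation of the $\Lambda^{-1}$ singular contributions in $-2\log Z$ and $\E_\sigma[\log(Z_1 Z_2)]$, leaving $|\d V/\d\Lambda|$ with a finite limit at $\Lambda=0$.

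Combining continuity on $(0,\infty)$ with the two endpoint limits yields a finite uniform constant $c_0$ bounding $|\d V/\d\Lambda|$ on $[0,\infty)$. The main obstacle lies in the $\Lambda\downarrow 0$ analysis: the leading $\pm 2c^{**}/\Lambda$ pieces of the two terms of $V$ (with $c^{**}:=c(x^{**},y^{**})$) cancel to leading order, but pushing the local Laplace asymptotics far enough to show $\mathrm{Cov}_{\sigma_i}(\tilde c_i,\hat U_i)=O(\Lambda^3)$ is the technical heart of the estimate, and relies on non-degeneracy of the Hessian of $c$ at $(x^{**},y^{**})$ together with the envelope identity for the marginal minima $c_i^*$.
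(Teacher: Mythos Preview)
Your plan for continuity of $\d V/\d\Lambda$ on $(0,\infty)$ via dominated convergence (using the Jensen envelope $|\log Z_i|\le \bar c_i/\Lambda$ and the lower bound on $Z$ from Lemma~\ref{lem:best-response-marginals}) coincides with the paper's argument. For the limit $\Lambda\to\infty$ the paper also concludes $\d V/\d\Lambda\to 0$, but by a direct dominated-convergence step using the elementary pointwise bound $c\,e^{-c/\Lambda}/\Lambda\le 1/e$ rather than a Taylor expansion; your Cauchy--Schwarz route to $O(\Lambda^{-3})$ would implicitly require $\iint c^2\,\d\mu\,\d\nu<\infty$, which Assumption~\ref{assump:c_mu_nu_nice_enough} does not give.

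The real divergence is at $\Lambda\downarrow 0$. The paper performs \emph{no} Laplace analysis there: once continuity on $(0,\infty)$ and decay at infinity are in hand, it simply picks $\Lambda_N$ with $|\d V/\d\Lambda|<1$ for $\Lambda\ge\Lambda_N$ and appeals to compactness on $[0,\Lambda_N]$ to get a finite maximum $c_1$, setting $c_0=\max\{1,c_1\}$. (The paper is admittedly loose about the endpoint $0$; for the only use of the lemma, in Proposition~\ref{prop:Lambda-properties}, one has $\Lambda(t)\ge\Lambda(0)>0$, so a bound on $[\Lambda(0),\infty)$ is all that is needed.) Your asymptotic program at $\Lambda=0$ is therefore not what the paper does, and as sketched it has genuine gaps: Assumption~\ref{assump:unique_minimizer} concerns uniqueness of the minimizer of $E(\cdot,\Lambda)$ for each fixed $\Lambda>0$, not uniqueness of a global minimizer $(x^{**},y^{**})$ of $c$ on $\supp\mu\times\supp\nu$; you also impose Hessian non-degeneracy at that point, which is nowhere assumed. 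Even granting both, the claimed cancellation $\mathrm{Cov}_{\sigma_i}(\tilde c_i,\hat U_i)=O(\Lambda^3)$ is not justified---since $\tilde c_i$ and $\hat U_i$ both converge to $c_i^*$, the covariance is to leading order $\mathrm{Var}_{\sigma_i}(c_i^*)$, and for a function vanishing to second order at the concentration point under a measure of width $\sqrt\Lambda$ one generically obtains $O(\Lambda^2)$, which would still leave $|\d V/\d\Lambda|\sim\Lambda^{-1}$. You would be better served dropping the $\Lambda\downarrow 0$ analysis entirely and arguing boundedness on $[\Lambda(0),\infty)$, which is the paper's (implicit) strategy and all that the downstream well-posedness requires.
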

\begin{proof}
Consider a sequence $\Lambda_n\to\infty$. As in the proof of Lemma~\ref{lem:continuity_V}, we have $Z[\Lambda_n], Z_1[\Lambda_n](x), Z_2[\Lambda_n](y)\to 1$  (pointwise) as $n\to\infty$. Thanks to Lebesgue's dominated convergence theorem again, we can pull the limit $n\to \infty$ into all integrals appearing in the expression of $\d V/\d\Lambda$ from Lemma~\ref{lem:dV_dLambda}. To do so, we use that $|\log Z_1[\Lambda](x)|\le \int c\d\nu/\Lambda$ (and similarly for $Z_2)$ thanks to Jensen's inequality, and $ce^{-c/\Lambda}/\Lambda\le 1/e$. Hence
$$
\left|\frac{\d V}{\d\Lambda}(\Lambda_n)\right|\to 0 \quad \text{ as } n\to \infty\,.
$$
Using a similar argument via Lebesgue's dominated convergence theorem, one can show that for any sequence $\tilde\Lambda_n\to\tilde\Lambda$ it holds $\d V(\tilde\Lambda_n)/\d\Lambda\to \d V(\tilde\Lambda)/\d\Lambda$, and so $\d V/\d\Lambda$ is continuous.
Finally, for our sequence $\Lambda_n\to\infty$, there exists $N>0$ such that $|\d V(\Lambda_n)/\d\Lambda|<1$ for all $n\ge N$, and by continuity $\d V/\d\Lambda$ achieves its maximum (denoted by $c_1$) on $[0,\Lambda_N]$. We conclude by taking $c_0:=\max\{1,c_1\}$.
\end{proof}
\begin{proposition}[Properties $\Lambda(t)$]\label{prop:Lambda-properties}
    The ODE $\dot\Lambda=V(\Lambda)$ with $\Lambda(0)>0$ has a unique solution $\Lambda(\cdot)\in C([0,\infty))$ that satisfies $V(\Lambda(t))\to 0$ as $t\to\infty$. In other words, the best response $b[\Lambda(t)]$ satisfies the marginal constraints asymptotically.
\end{proposition}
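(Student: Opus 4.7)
The plan is to establish global well-posedness of the ODE via Cauchy-Lipschitz, and then extract the asymptotic vanishing of $V(\Lambda(t))$ by combining monotonicity of $\Lambda(t)$ with continuity of $V$ through a two-case argument.

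For existence and uniqueness, I would invoke Cauchy-Lipschitz directly: $V$ is globally Lipschitz on $(0,\infty)$ by Lemma~\ref{lem:V-Lip}, hence the autonomous initial-value problem $\dot\Lambda=V(\Lambda)$, $\Lambda(0)>0$ admits a unique maximal $C^1$ solution. Since $V\ge 0$ (by non-negativity of the Kullback-Leibler divergence), $\Lambda(\cdot)$ is non-decreasing, so $\Lambda(t)\ge\Lambda(0)>0$ for all $t\ge 0$; combined with the global Lipschitz estimate (or equivalently with Theorem~\ref{thm:Lambda-upper}), blow-up in finite time is ruled out and $\Lambda\in C^1([0,\infty))\subset C([0,\infty))$.

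For the long-time behaviour, let $\Lambda_\infty:=\lim_{t\to\infty}\Lambda(t)\in[\Lambda(0),\infty]$, which exists by monotonicity. I would split into two cases. If $\Lambda_\infty=\infty$, the bound $V(\Lambda)\le c^*/\Lambda$ shown inside the proof of Theorem~\ref{thm:Lambda-upper} immediately gives $V(\Lambda(t))\le c^*/\Lambda(t)\to 0$. If $\Lambda_\infty<\infty$, continuity of $V$ (Lemma~\ref{lem:continuity_V}) yields $V(\Lambda(t))\to V(\Lambda_\infty)$, while the identity
\[
\int_0^\infty V(\Lambda(t))\,\d t=\int_0^\infty\dot\Lambda(t)\,\d t=\Lambda_\infty-\Lambda(0)<\infty
\]
shows that $V(\Lambda(\cdot))$ is integrable on $[0,\infty)$; a non-negative function on $[0,\infty)$ that has a limit at infinity and is integrable there must have zero limit, so $V(\Lambda_\infty)=0$ and again $V(\Lambda(t))\to 0$.

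Finally, since $V=\KL{b_1[\Lambda]}{\mu}+\KL{b_2[\Lambda]}{\nu}$ is a sum of two non-negative contributions, $V(\Lambda(t))\to 0$ forces each KL-divergence to vanish individually, hence the marginals of $b[\Lambda(t)]$ converge to $\mu$ and $\nu$ (e.g.\ in total variation via Pinsker), which is the asymptotic marginal-constraint statement. The main obstacle is the bounded-$\Lambda_\infty$ case, where one must simultaneously use continuity of $V$ (Lemma~\ref{lem:continuity_V}) and the integrability identity above to force the limit of $V(\Lambda(t))$ to be zero; the remaining steps are essentially a direct consequence of standard ODE theory together with the quantitative estimates already proved in Theorem~\ref{thm:Lambda-upper} and Lemma~\ref{lem:V-Lip}.
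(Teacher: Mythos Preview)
Your proof is correct and uses the same ingredients as the paper (Lemma~\ref{lem:V-Lip} for well-posedness, Lemma~\ref{lem:continuity_V} and the estimate $V(\Lambda)\le c^*/\Lambda$ from the proof of Theorem~\ref{thm:Lambda-upper} for the asymptotics), but the long-time step is organised differently. The paper splits on the value of $V_\infty:=\lim_{t\to\infty}V(\Lambda(t))$ and rules out $V_\infty>0$ by showing it would force $\Lambda(t)$ to grow at least linearly, contradicting the $\sqrt{t}$ upper bound of Theorem~\ref{thm:Lambda-upper}. You instead split on whether $\Lambda_\infty$ is finite or infinite: in the infinite case you read off $V(\Lambda(t))\to 0$ directly from $V\le c^*/\Lambda$, and in the finite case you combine continuity of $V$ with the integrability identity $\int_0^\infty V(\Lambda(t))\,\d t=\Lambda_\infty-\Lambda(0)<\infty$. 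Your route is slightly more self-contained, since it does not need to first argue that $V(\Lambda(t))$ has a limit when $\Lambda_\infty=\infty$ (which the paper asserts ``by continuity'' without spelling out); the paper's route, on the other hand, makes more explicit the role of the $\sqrt{t}$ growth bound as the mechanism preventing the marginal violation from stalling at a positive value.
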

\begin{proof}
  For $\Lambda(0)>0$, define $C_0:= \frac{c^*}{\Lambda_0(\Lambda_0+1)}$. Since $\dot\Lambda=V(\Lambda)\ge 0$, we have 
$$
\Lambda(t)\ge \Lambda(0)= -\frac{1}{2}+\sqrt{\frac{1}{4}+\frac{c^*}{C_0}}\,\Rightarrow\,
V(\Lambda)\le \frac{c^*}{\Lambda}\le C_0(1+\Lambda)\,.
$$
for all $t\ge 0$. Thanks to continuity of $V$ (Lemma~\ref{lem:continuity_V}), Lipschitzness of $V$ (Lemma~\ref{lem:V-Lip}) and the growth condition from Theorem~\ref{thm:Lambda-upper}, we obtain global existence and uniqueness of a continuous solution $\Lambda(t)$ to the ODE $\dot\Lambda=V(\Lambda)$.
Further, $\Lambda(t)$ is non-decreasing, so it must either remain bounded for all times, or diverge to $+\infty$. Thus, we can write $\Lambda(t)\to\Lambda_\infty\in [\Lambda(0),\infty]$, and by continuity, $$V(\Lambda(t))\to V_\infty:=V(\Lambda_\infty)\in [0,\infty]\,.$$ 
Note from Lemma~\ref{lem:best-response-marginals}  for all $t\ge 0$,
$$
V(\Lambda(t))= \frac{1}{\Lambda(t)}\left(E_d(\Lambda(t))-\int c\d b[\Lambda(t)]\right)\le \frac{c^*}{\Lambda(0)}\,,
$$
and so $V$ is uniformly bounded from above. Thus, 
there are two possibilities: either (a) $V_\infty\in (0,\infty)$, or (b) $V_\infty=0$.
In Case (a), there exists $T>0$ such that for all $t\ge T$,
\begin{align*}
    \dot\Lambda(t)=V(\Lambda(t))> \frac{V_\infty}{2}>0\,.
\end{align*}
Integrating the ODE, we obtain the lower bound
\begin{align*}
    \Lambda(t)>\frac{\Lambda_\infty}{2} t + V_0 \quad \forall t\ge T\,,
\end{align*}
where $V_0=V_0(T)\in\R$. For large enough $t\ge T$, this is a contradiction to the upper bound in Theorem~\ref{thm:Lambda-upper}, so (a) cannot happen. We conclude $V(\Lambda(t))\to 0$.  
\end{proof}

\begin{corollary}[Convergence of OT Cost and Minimizer]
 The best response $b[\Lambda(t)]\rightarrow\rho^*$ in $\Wass$, where $\rho^*$ is an optimizer for  \eqref{eq:OT_problem}, and $E_d(\Lambda)\rightarrow c^*$ as $t\rightarrow \infty$.
\end{corollary}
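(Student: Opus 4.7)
The plan is to combine the convergence $V(\Lambda(t))\to 0$ from \cref{prop:Lambda-properties} with lower semicontinuity and tightness arguments applied to the best-response trajectory $t\mapsto b[\Lambda(t)]$. Since $\Lambda(\cdot)$ is non-decreasing by non-negativity of $V$, it has a limit $\Lambda_\infty\in[\Lambda(0),\infty]$, and I would split into the cases $\Lambda_\infty<\infty$ and $\Lambda_\infty=\infty$.

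In the bounded case, continuity of $b$ in $\Wass$ (\cref{lem:continuity_b(rho)}) gives $b[\Lambda(t)]\to b[\Lambda_\infty]$ in $\Wass$ directly. By continuity of $V$ (\cref{lem:continuity_V}) we have $V(\Lambda_\infty)=0$, which forces both KL marginal terms to vanish, so Gibbs' inequality yields $b_1[\Lambda_\infty]=\mu$ and $b_2[\Lambda_\infty]=\nu$, i.e., $b[\Lambda_\infty]\in\Gamma(\mu,\nu)$. Combining $E_d(\Lambda_\infty)=\int c\,\d b[\Lambda_\infty]\le c^*$ (from the proof of \cref{lem:best-response-marginals}) with the reverse inequality given by $b[\Lambda_\infty]\in\Gamma(\mu,\nu)$ forces $b[\Lambda_\infty]$ to be OT optimal, so both claims follow in this case.

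In the unbounded case, I would first establish uniform tightness and bounded second moments of the family $\{b[\Lambda(t)]\}_{t\ge 0}$ using the explicit formulas \eqref{eq:best-response-marginal-mu}--\eqref{eq:best-response-marginal-nu}: the envelope $b_i[\Lambda]\le \mu_i/Z[\Lambda]$ combined with the monotone lower bound $Z[\Lambda(t)]\ge e^{-c^*/\Lambda(0)}$ yields a uniform bound on $\int\|z\|^2\,\d b[\Lambda(t)]$, providing narrow precompactness by Prokhorov's theorem. For any cluster point $\bar\rho$ of $b[\Lambda(t_n)]$ along some $t_n\to\infty$, lower semicontinuity of the KL divergence forces $\KL{\bar\rho_i}{\mu_i}\le\liminf\KL{b_i[\Lambda(t_n)]}{\mu_i}=0$, so $\bar\rho\in\Gamma(\mu,\nu)$. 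By Fatou's lemma for weakly converging measures (as in \cref{prop:gamma_cv}), $\int c\,\d\bar\rho\le\liminf\int c\,\d b[\Lambda(t_n)]\le c^*$, so $\bar\rho$ is an OT optimizer. To upgrade to convergence in $\Wass$, the pointwise convergences $Z[\Lambda(t_n)]\to 1$ and $Z_i[\Lambda(t_n)](\cdot)\to 1$ with the same integrable envelope give $\int\|z\|^2\,\d b_i[\Lambda(t_n)]\to\int\|z\|^2\,\d\mu_i$ by dominated convergence, matching the second moment of $\bar\rho$; narrow convergence plus convergence of second moments is $\Wass$ convergence.

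The convergence $E_d(\Lambda(t))\to c^*$ then follows by sandwiching: the decomposition $E_d(\Lambda)=\int c\,\d b[\Lambda]+\Lambda V(\Lambda)$ together with $\Lambda V\ge 0$ and $E_d\le c^*$ gives $\int c\,\d b[\Lambda(t)]\le E_d(\Lambda(t))\le c^*$, and along any convergent subsequence the lower bound tends to $c^*$ by OT optimality of the cluster point. The main obstacle I foresee is the narrow-to-$\Wass$ upgrade in the unbounded case, since standard $\Gamma$-convergence arguments produce only narrow cluster points; this is overcome by exploiting the Gibbs-type expression from \cref{lem:best-response-marginals} to produce an explicit integrable envelope that allows dominated convergence of the marginal second moments.
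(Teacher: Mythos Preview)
Your argument is correct and, for the bounded case $\Lambda_\infty<\infty$, coincides with the paper's proof essentially verbatim: continuity of $b$ (\cref{lem:continuity_b(rho)}) and of $V$ (\cref{lem:continuity_V}) give $b[\Lambda(t)]\to b[\Lambda_\infty]\in\Gamma(\mu,\nu)$, and the sandwich $c^*\le \int c\,\d b[\Lambda_\infty]=E_d(\Lambda_\infty)\le c^*$ closes.

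The genuine difference is in how the case $\Lambda_\infty=\infty$ is handled. The paper does not split cases: it invokes continuity of $b$ at $\Lambda_\infty$ uniformly, writing $b[\Lambda(t)]\to b[\Lambda_\infty]$ in $\Wass$ even when $\Lambda_\infty=\infty$. Strictly speaking, \cref{lem:continuity_b(rho)} is only stated for finite limits $\bar\Lambda>0$, so the paper is tacitly extending it to the endpoint (justifiable via the $\Gamma$-convergence $E(\cdot,\Lambda)\overset{\Gamma}{\to}$\,\eqref{eq:OT_problem} from \cite[Theorem~7]{liu_approximating_2021} together with a second-moment argument). Your treatment makes this extension explicit and self-contained: you extract the uniform density envelope $b_i[\Lambda]\le e^{c^*/\Lambda(0)}\mu_i$ from \eqref{eq:best-response-marginal-mu}--\eqref{eq:best-response-marginal-nu} and \eqref{eq:Z}, use it both for Prokhorov tightness and for dominated convergence of the second moments, and then identify every narrow cluster point as an OT optimizer via lsc of KL and Fatou. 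This is more careful than the paper on this point, at the cost of some length. One small item you leave implicit is the passage from ``every cluster point is OT optimal'' to convergence of the full trajectory; this follows from the $\Wass$-precompactness you established together with uniqueness of the OT plan (or, absent uniqueness, should be read as convergence to the set of optimizers---the paper's statement has the same ambiguity).
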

\begin{proof}
    Applying Theorem~\ref{thm:danskin}, the time derivative of $E_d$ along solutions to \eqref{eq:best_response_dynamics} is
\begin{align*}
    \ddt E_d(\Lambda) = V(\Lambda)^2\,.
\end{align*}
This indicates that unless $V=0$, the energy will increase. 
From Proposition~\ref{prop:Lambda-properties}, we have that the solution $\Lambda(t)$ to \eqref{eq:best_response_dynamics} converges to $\Lambda_\infty\in [\Lambda(0),\infty]$ as $t\to\infty$.  By continuity of the best response (Lemma~\ref{lem:continuity_b(rho)}), $b[\Lambda(t)]\to b_\infty:=b(\Lambda_\infty)$ in $\Wass$. Note that $V_\infty:=V(\Lambda_\infty)=0$ (Proposition~\ref{prop:Lambda-properties}) means $b_\infty\in\Gamma(\mu, \nu)$
and so $E_d(\Lambda_\infty)=\int c b_\infty \ge c^*$ since $b_\infty$ is admissible for the OT problem \eqref{eq:OT_problem}. Since also $E_d(\Lambda_\infty)\le c^*$ (Lemma~\ref{lem:best-response-marginals}), we have equality, and $b_\infty$ is an optimal plan in $\Gamma(\mu, \nu)$. Finally, $E_d$ is continuous thanks to the continuity of $b$ and $V$, and so $E_d(\Lambda)\rightarrow E_d(\Lambda_\infty)=c^*$. 
\end{proof}

\section{Numeric Implementation}
In practice, Theorem~\ref{thm:danskin} allows us to chose $\dot\Lambda=V(\Lambda)$, rather than taking a gradient of $b[\Lambda]$ with respect to $\Lambda$. In our numerical implementation\footnote{Code available  \href{https://github.com/LEConger/OT-MinMax-GradientFlow}{https://github.com/LEConger/OT-MinMax-GradientFlow}.}, we chose very small  $\beta$, which results in $\rho$ updating much faster than $\Lambda$, and we expect this to approximate the dynamics \eqref{eq:best_response_dynamics} due to the timescale separation results in \cite{borkar_multiple_2008}. We adapt a method from \cite{liu_approximating_2021} by approximating the distribution $\rho$ by particles $(X_i,Y_i)\sim \rho$.
We divide the points $(X_j,Y_j)_{j=1}^{2N}$ into two sets: for one set, denoted $(X^{(1)}_i,Y^{(1)}_i)_{i=1}^N$, we initialize $X_i^{(1)}\sim\mu$ and $Y_i^{(1)}=X_i^{(1)} + \xi_i$, where $\xi_i\sim \mathcal N(0,\eta)$ with small $\eta$, and we evolve $Y_i^{(1)}$ while keeping $X_i^{(1)}$ fixed. For the other set, denoted $(X^{(2)}_i,Y^{(2)}_i)$ we initialize $Y_i^{(2)}\sim\nu$ and set $X_i^{(2)}=Y_i^{(2)} + \xi_i$, and evolve $X_i^{(2)}$ while keeping $Y_i^{(2)}$ fixed, resulting in 
\begin{align*}
    \dot X_i^{(2)}(t) &= -\frac{1}{\beta}\nabla_x c(X_i^{(2)}(t),Y_i^{(2)}) + \frac{1}{\beta}\Lambda(t)\nabla_x \log \frac{\rho_1(t,X_i^{(2)}(t))}{\mu(X_i^{(2)}(t))} \\
    \dot Y_i^{(1)}(t) &= -\frac{1}{\beta}\nabla_y c(X_i^{(1)},Y_i^{(1)}(t)) + \frac{1}{\beta}\Lambda(t)\nabla_y \log \frac{\rho_2(t,Y_i^{(1)}(t))}{\nu(Y_i^{(1)}(t))}  \\
    \dot \Lambda(t) &=\KL{\rho_1(t)}{\mu} + \KL{\rho_2(t)}{\nu}\,.
\end{align*}

 The densities $\rho_1,\rho_2$ are approximated at each time step by the binning estimator
\begin{align*}
    \rho_1(x) &\approx \frac{1}{2N}\sum_{k=1}^{N_b} \mathbf 1_{\Omega_k}\{x \} \sum_{m=1}^{N}  (\mathbf 1_{\Omega_k}\{X_m^{(1)}\}+\mathbf 1_{\Omega_k}\{X_m^{(2)}\}) \\
    \rho_2(y) &\approx \frac{1}{2N}\sum_{k=1}^{N_b} \mathbf 1_{\Omega_k}\{y \} \sum_{m=1}^{N}  (\mathbf 1_{\Omega_k}\{Y_m^{(1)}\}+\mathbf 1_{\Omega_k}\{Y_m^{(2)}\}) \,,
\end{align*}
where the domain is divided into $N_b$ regions, and the fraction of particles in each region $\Omega_k$ is counted. 
The key differences from \cite{liu_approximating_2021} are (i) how we estimate the marginal densities $\rho_1(t),\rho_2(t)$ at each time step, and  (ii) the choice of particle initialization and updates. For (i), our density estimator is significantly faster to compute because the complexity scales as $\mathcal O(N_b)$ instead of $\mathcal O(N^2)$, and we can select $N_b \ll N^2$. For (ii), the update strategy ensures that each marginal has at least half of the particles distributed according to the input distribution, and the correlation between $X$ and $Y$ particles in the initial condition aims to minimize cost functions of the form $c(x,y)=h(x-y)$. We  also compute marginal estimates of $\mu$ and $\nu$ from samples rather than assuming closed-form knowledge, and including a time-varying $\Lambda$.

\paragraph{Method Comparison}
We compare our method with the method proposed in \cite{liu_approximating_2021}, for computing an approximate transport plan between $\mathcal N(0.4\Id_2,\Sigma)$ and $\mathcal N(0.6\Id_2,\Sigma)$ for $\Sigma=0.02\Id_2$. To replicate the method in \cite{liu_approximating_2021}, we used $1000$ points with $dt=0.00002$ for $T=50$ iterations and selected the $\tau$ parameter for best results. This results in a run time of 2 minutes and 14 seconds. In comparison, we implemented our method with $2N=20,000$ points, $dt=0.0005$, and $T=2000$ iterations, which had a run time of only six seconds. In both methods we include additive Gaussian noise in the particle update with standard deviation $0.02\sqrt{dt}$ for smoothing; the results are shown in Figure~\ref{fig:Gaussian_comparison}. In Figures \ref{fig:Gaussian_comparison} and \ref{fig:KL_switching}, we plot instances of the interpolant $\mu_s:=((1-s)x+sy)_\#\rho_\infty$ for different values of $s\in (0,1)$, where $\rho_\infty$ is the coupling at the final time obtained dynamically via the chosen numerical scheme.
\begin{figure}
    \centering
    \includegraphics[width=0.7\linewidth]{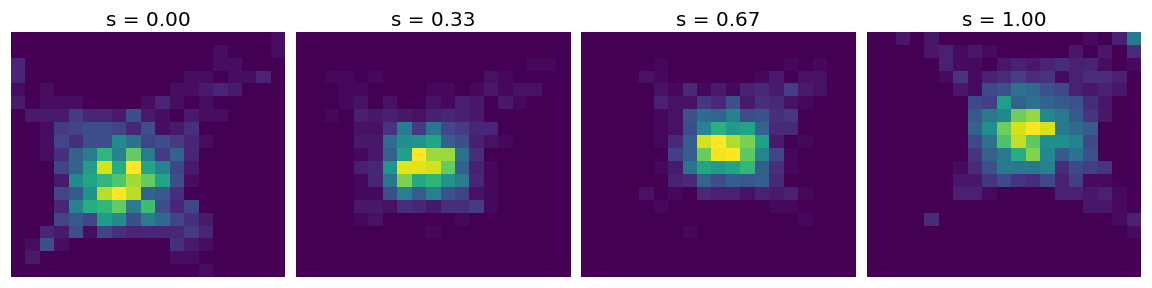} 
    \includegraphics[width=0.7\linewidth, trim={0 0 0 1cm},clip]{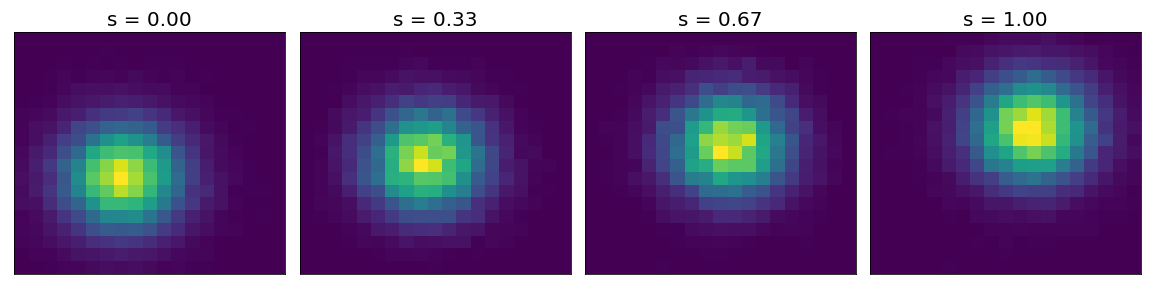}
    \caption{The transport map generated with a kernel density estimator method from \cite{liu_approximating_2021} (top) is limited by the number of points ($N=1,000$), whereas estimating the density by counting voxels (bottom, our method) allows for a greater number of points ($20,000$) and faster computation time.}
    \label{fig:Gaussian_comparison}
\end{figure}

The optimal transport cost $c^*$ for $c(x,y)=\norm{x-y}^2$ can be computed in closed form to be $0.080$; numerically, our method achieves a cost of $0.0861$ (Figure~\ref{fig:coupling_cost_KL_divergence}), with an $L^2$ error between the marginals and input marginals of $0.006$ and KL divergence of $0.098$. This shows the obtained transport plan is very close to the optimal plan.
\begin{figure}
    \centering
    \includegraphics[width=0.55\linewidth]{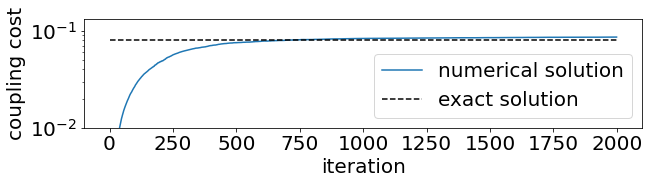} 
    \includegraphics[width=0.55\linewidth]{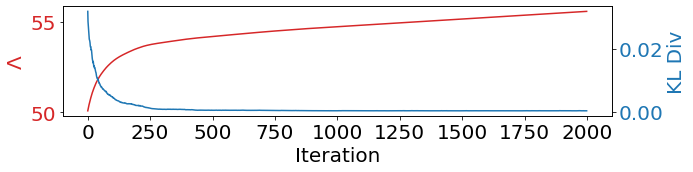}
    \caption{In the setting of two log-concave input measures, the coupling cost starts small due to the initialization of $X_i\approx Y_i$, and increases to approach the optimal coupling cost. The regularization penalty weight $\Lambda$ increases more slowly over time, as expected due to the $\mathcal O (\sqrt{t})$ estimate, and the KL divergence between the marginals and the inputs decreases as the particles drift to minimize the entropy regularization terms.}
    \label{fig:coupling_cost_KL_divergence}
\end{figure}
The computational speedup in comparison with \cite{liu_approximating_2021} is due to the estimation of the kernel density. 
\paragraph{Reversing KL Divergence, Non-Log-Concave Source and Target}
We experiment with non-log-concave reference measures, shown in Figure~\ref{fig:target_distributions}. We also test using the energy $\tilde E$, where the KL terms are reversed. In this gradient flow setting, $\KL{\mu}{\cdot}$ is not necessarily geodesically convex, even if the source marginal $\mu$ is log-concave. Interestingly, we observe better convergence using the reversed KL for a number of examples which coincides with the phenomenon that $\KL{\cdot}{\mu}$ exhibits "mode-seeking" behavior while $\KL{\mu}{\cdot}$ exhibits moment matching behavior~\cite{sanz-alonso_inverse_2023}. In the particular example shown (Figure~\ref{fig:target_distributions}) where the density of $\mu$ has a ring-shape with a central peak and $\nu$ is a mixture of four Gaussian distributions, using $\KL{\mu}{\rho_1}$ to flow points from $\nu$ to $\mu$ results in a smaller $L^2$ error norm, $\int |\rho_1(x)-\rho_1^*(x)|^2\d x$, compared with $\KL{\rho_1}{\mu}$. We suspect this is due to the moment-matching behavior of $\KL{\mu}{\rho_1}$, since the estimated marginal achieved with $\KL{\mu}{\rho_1}$ preserves mass on the outer ring, which is further away from the initial conditions of the point $(X_i^{(2)})$ (as compared to being mode-seeking, which would focus the points more on the mode near the origin).  For the points $(Y_i^{(1)})$ flowing from $\mu$ to $\nu$, the mode-seeking behavior (corresponding to $\KL{\rho_2}{\nu}$) is more advantageous because $\nu$ is very multi-modal. In contrast, when using $\KL{\nu}{\rho_2}$, we observe more outlier points which makes the resulting marginal further from the target distribution.

In our experiments, we found that using randomly selected left or right differences to estimate the gradients outperformed the centered first-order and centered second-order gradients. Using the left and right differences, we compare three settings:
\begin{itemize}
    \item[(I)] Both the $X^{(1)}$ and $Y^{(2)}$ particles are updated via $E$.
    \item[(II)] Both the $X^{(1)}$ and $Y^{(2)}$ particles are updated via $\tilde E$.
    \item[(III)] The $X^{(1)}$ particles are updated via $\tilde E$ and the $Y^{(2)}$ particles are updated via $E$.
\end{itemize}
The simulation results, shown in Figure~\ref{fig:KL_switching}, show that evolution according to (I) and (III) give similar results, while (II) is slightly worse with some rogue particles in corners at the target $\nu$. For all three setting, a large error comes from the source $\mu$. The $L^2$ norm error and KL divergences between the obtained marginals and the true input marginals are listed in Table~\ref{tab:KL_switching}, showing that method (III) has the smallest $L^2$ error and ties for the smallest KL divergence error.  Investigating when $\tilde E$ versus $E$ gives better performance is a future direction of research, along with the displacement convexity properties of $\tilde E$. 

\begin{figure}
    \centering
    \includegraphics[width=0.4\linewidth]{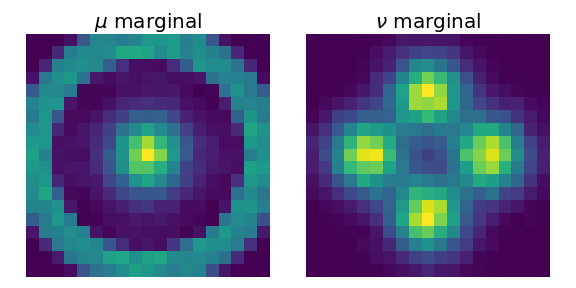}
    \caption{The source and target distributions for the experiment (b) are shown. Note that the input marginals are not log-concave, going beyond Assumption~\ref{assump:c_mu_nu_convex}.}
    \label{fig:target_distributions}
\end{figure}
\begin{figure}
    \centering
    \includegraphics[width=0.65\linewidth]{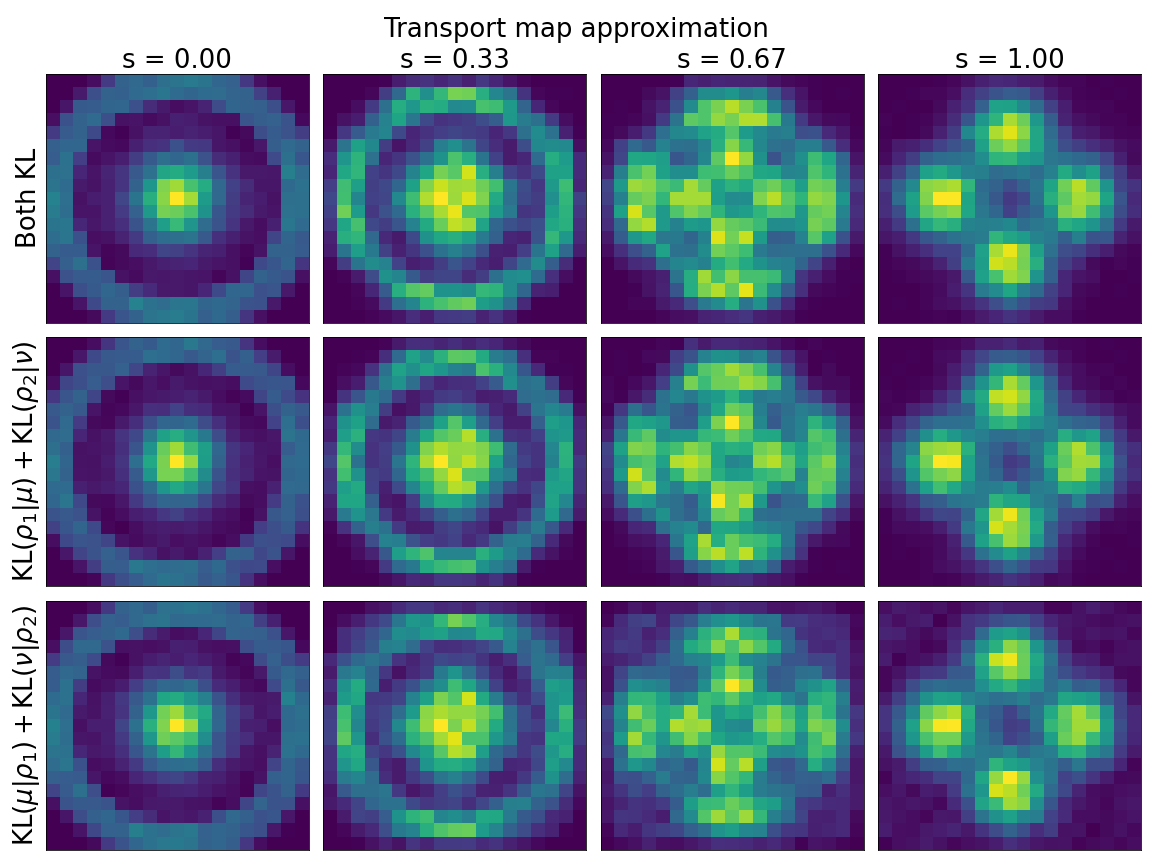}
    \caption{We test our method on two input measures which are non-log-concave (Left side is $\mu$ and right side is $\nu$). When running our method with two different entropy regularizers, using $\KL{\rho}{\mu,\nu}$ for the $Y^{(2)}$ particles and $\KL{\mu,\nu}{\rho}$ for the $X^{(2)}$ particles (labeled "both KL") is compared with using the same for all particles.  While the displacement convexity of $\KL{\cdot}{\mu}$ is well known, the displacement convexity of $\KL{\mu}{\cdot}$ is not. }
    \label{fig:KL_switching}
\end{figure}

\begin{table}[]
    \centering
    \begin{tabular}{m{5em}||m{5em}|m{5em}|m{5em}|m{5em}}
       & $L^2$ error & KL & reverse KL & total KL  \\
        \hline
     (I), $E$        & 0.030 & 0.14 & 0.13 & 0.27 \\
     (II), $\tilde E$ & 0.033 & 0.88 & 0.23 & 1.11 \\
     (III), both       & 0.028 & 0.14 & 0.13 & 0.27
    \end{tabular}
    \caption{Using method (III)
    results in the smallest $L^2$ error among the three methods because it takes advantage of $\tilde E$ being a better flow in the direction from $\nu$ ($s=1$) to $\mu$ ($s=0$) and $E$ being a better flow in the direction from $\mu$ ($s=0$) to $\nu$ ($s=1$). The large error for the $\tilde E$ flow is due to the poor performance for $\tilde E$ in the $\mu$ to $\nu$ direction, and this is avoided in method (III). }
    \label{tab:KL_switching}
\end{table}

\section*{Acknowledgments}
LC is supported by NDSEG and PIMCO fellowships. FH is supported by start-up funds at the California Institute of Technology and by NSF CAREER Award 2340762. RB is supported in part by a Vannevar Bush Faculty Fellowship (award N00014-22-1-2790) held by Andrew M. Stuart. EM is supported in part from NSF award 2240110.

\printbibliography
\end{document}